\renewcommand{\leq}{\leqslant}
\renewcommand{\emptyset}{\varnothing}
\newcommand{\rk}{\mathop{\mathrm{rk}}\nolimits}
\newcommand{\End}{\mathop{\mathrm{End}}\nolimits}
\newcommand{\Ker}{\mathop{\mathrm{Ker}}\nolimits}
\newcommand{\Hom}{\mathop{\mathrm{Hom}}\nolimits}
\def\Q{{\mathbb{Q}}}
\def\Z{{\mathbb{Z}}}
\def\N{{\mathbb{N}}}
\newcommand{\bbZ}{\mathbb{Z}}
\newcommand{\calG}{\mathcal{G}}
\newcommand{\Image}{\operatorname{Im}}
\theoremstyle{plain}
\newtheorem{theorem}{Theorem}[section]
\newtheorem{lemma}[theorem]{Lemma}
\newtheorem{Cexample}[theorem]{Corner-Example}
\newtheorem{Summary}[theorem]{Summary}
\newtheorem{question}[theorem]{Question}
\newtheorem{proposition}[theorem]{Proposition}
\theoremstyle{definition}
\newtheorem{definition}[theorem]{Definition}
\newtheorem{corollary}[theorem]{Corollary}
\date{\today}
\title{Decompositions of torsion-free abelian groups}
\author{G\'{a}bor Braun}
\address{ISyE, Georgia Institute of Technology\\
755 Ferst Drive\\ NW
Atlanta\\ GA 30332
USA\\}
\email{gabor.braun@isye.gatech.edu}   
\author {Phill Schultz}
\address {School of Mathematics and Statistics\\
The University of Western Australia \\ Nedlands\\
 Australia,  6009}
\email {phill.schultz@uwa.edu.au}
\author{Lutz Str\"ungmann}
\thanks{The third author is grateful to the German Research Foundation for their support by the project DFG-STR 627/14-1}
\address{Faculty of Computer Sciences\\ University of Applied Science Mannheim\\68163 Mannheim\\Germany}
\email{l.struengmann@hs-mannheim.de}
\subjclass[2010]{20K15; 20K20} \keywords{Main decomposition; clipped groups}
\begin{document}
\maketitle

\large
\begin{abstract}\noindent
  It is known that every torsion-free abelian group of finite rank has
  a maximal completely decomposable summand that is unique up to
  isomorphism. We show that groups of infinite rank need not have
  maximal completely decomposable summands, but when they do, this
  summand is unique up to isomorphism.
\end{abstract}

\section{Introduction}

It is well-known that torsion-free abelian groups of finite rank are
highly complicated and that their complexity increases with the rank
by an important result due to Simon Thomas \cite{thomas}.
Thus there is no hope for a classification except for the torsion-free
groups of rank $1$ where Reinhold Baer \cite{Baer} gave a complete
description in terms of types.
Clearly, in the infinite rank case the situation is even more
difficult.
As a consequence it is of great interest to investigate direct
decompositions of torsion-free groups into indecomposable summands and
to try to describe the indecomposable bricks.
Such decompositions always exist in the finite rank case and a result
due to Lady (\cite{Lady}) says that for finite rank torsion-free
groups there are only finitely many such decompositions up to
isomorphism.
Collecting the summands of rank $1$ one obtains a maximal completely
decomposable summand of a torsion-free group of finite rank and a
classical result by Stein says that this can be even done in the
infinite rank case when restricting to summands of type the
integers.
In this paper we are investigating a generalisation of the
corresponding results to the infinite rank case.

The principal  result in \cite{MaSc}, states that if $G$ is a torsion-free abelian group of finite rank, then $G$ has a decomposition $A\oplus H$ in which $A$ is completely decomposable and $H$ is clipped, that is, $H$ has no rank 1 summand. In this case, $A$ is unique up to isomorphism and $H$ is unique up to {near isomorphism}, an equivalence of abelian groups which is explained below.

Our extension states that for any torsion-free abelian group $G$, if
$G=A\oplus H$ where $A$ is completely decomposable and $H$ is clipped,
then $A$ is unique up to isomorphism and the rank of $H$ is uniquely
determined.  Furthermore, we find classes of groups for which such a
decomposition exists and provide examples where it does not exist.
 
Throughout, we denote by $\calG$ the category of torsion-free abelian
groups.  Unexplained notation comes from the standard reference
\cite{Fuchs}.  For details on almost completely decomposable groups we
refer to the book \cite{Mader} and for torsion-free abelian groups of
finite rank in general to \cite{Arnold}.

\section{Maximal completely decomposable summands of
  torsion-free abelian groups}

A torsion-free abelian group $H$ is {\it clipped} if it has no direct summand of rank $1$ and $\tau$-clipped if it has no rank $1$ direct summand of type $\tau$. The following theorem is proved in \cite{MaSc}. Recall that groups  $G$ and $G'\in\calG$ of finite rank are called {\it nearly-isomorphic} if for every prime $p$ there is a monomorphism $\varphi_p: G \rightarrow G'$ such that $G'/\varphi_p(G)$ is finite and relatively prime to $p$. 

\begin{theorem}\cite{MaSc}
\label{finite-rank}
Let $G$ be a torsion-free abelian group of finite rank. Then $G$ has a Main Decomposition $G=A \oplus H$ such that $A$ is completely decomposable and $H$ is clipped.\\ Moreover, if $G=A' \oplus H'$ is a second decomposition with $A'$ completely decomposable and $H'$ clipped, then $A$ and $A'$ are isomorphic while $H$ and $H'$ are nearly isomorphic.
\end{theorem}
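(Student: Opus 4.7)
The plan is to separate the theorem into an existence part and a uniqueness part. Existence follows by an easy induction on rank. Uniqueness is obtained by appealing to the Arnold--Lady Krull--Schmidt theorem for near-isomorphism of finite-rank torsion-free abelian groups.

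\textbf{Existence.} I would induct on $n = \rank(G)$. If $G$ is already clipped, take $A = 0$ and $H = G$. Otherwise $G$ admits a rank-$1$ direct summand $C$ and we can write $G = C \oplus G_1$ with $\rank(G_1) = n-1$. The inductive hypothesis applied to $G_1$ yields $G_1 = A_1 \oplus H_1$ with $A_1$ completely decomposable and $H_1$ clipped. Then $G = (C \oplus A_1) \oplus H_1$ is a Main Decomposition.

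\textbf{Uniqueness.} Assume $G = A \oplus H = A' \oplus H'$ are two Main Decompositions. Write $A$ and $A'$ as direct sums of rank-$1$ groups, and further refine $H$ and $H'$ into indecomposable summands; this refinement is possible because rank is additive over direct sums, so any chain of proper decompositions terminates. Since $H$ and $H'$ are clipped, each of their indecomposable summands has rank $\geq 2$. We thus obtain two indecomposable direct decompositions of $G$. By the Arnold--Lady Krull--Schmidt theorem for near-isomorphism (see \cite{Arnold}), there is a bijection between the two lists of indecomposable summands pairing each summand with a nearly isomorphic partner. Since nearly isomorphic groups share the same rank, this bijection matches the rank-$1$ summands of $A$ with those of $A'$ and the rank-$\geq 2$ summands of $H$ with those of $H'$. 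For rank-$1$ groups near-isomorphism reduces to equality of type, which is equivalent to isomorphism, so $A \cong A'$. The residual pairing assembles into a near-isomorphism between $H$ and $H'$.

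The principal obstacle is the uniqueness statement, which rests on the deep Arnold--Lady theorem that finite-rank torsion-free abelian groups satisfy Krull--Schmidt up to near-isomorphism. This result is not elementary but is standard and available in \cite{Arnold}; Lady's finiteness theorem cited in the introduction is a close companion. An auxiliary point worth verifying is that two rank-$1$ groups are nearly isomorphic if and only if they are isomorphic, which reduces to the observation that any finite-index embedding between rank-$1$ groups preserves type.
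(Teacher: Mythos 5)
Your existence argument is fine (and essentially the standard one: strip off rank-$1$ summands, or equivalently take a completely decomposable summand of maximal rank). The uniqueness argument, however, has a genuine gap: the ``Arnold--Lady Krull--Schmidt theorem for near-isomorphism'' that you invoke --- a bijection between the indecomposable summands of any two indecomposable decompositions of $G$, pairing nearly isomorphic groups --- does not exist, and is in fact false. Nearly isomorphic groups have equal rank, so such a pairing would force the rank multisets of the two decompositions to agree; but by classical examples of J\'onsson and Corner (see \cite{Fuchs}) there are finite rank groups with indecomposable decompositions of ranks $(1,3)$ and $(2,2)$, say $G=A\oplus B=C\oplus D$. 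This does not contradict the theorem you are proving, because $C\oplus D$ is \emph{not} clipped even though $C$ and $D$ are: clippedness is not inherited by direct sums, and this is exactly the subtlety your step ``refine $H$ and $H'$ into indecomposable summands and match them up'' glosses over. What Arnold and Lady actually prove is weaker: near isomorphism coincides with stable isomorphism ($G\oplus K\cong G'\oplus K$ for some finite rank $K$ implies $G$ nearly isomorphic to $G'$), and near isomorphism respects direct decompositions.

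The correct route, which is the one in \cite{MaSc} and which is mirrored by Theorem \ref{main1} of this paper, is to prove $A\cong A'$ first and directly: split $A$ and $A'$ into $\tau$-homogeneous components and show, for each type $\tau$, that the complement of $A_\tau$ (respectively $A'_\tau$) is $\tau$-clipped (the analogues of Lemma \ref{lem-inhomogeneous} and Corollary \ref{cor-homogeneous}), whence $A_\tau\cong A'_\tau$; summing over $\tau$ gives $A\cong A'$. Only then does the genuine Arnold--Lady machinery enter: from $A\oplus H\cong A'\oplus H'$ and $A\cong A'$, cancellation up to near isomorphism of the finite rank completely decomposable summand yields that $H$ and $H'$ are nearly isomorphic. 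Your auxiliary observation that nearly isomorphic rank-$1$ groups are isomorphic is correct, but it cannot repair the missing Krull--Schmidt step.
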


Note that the completely decomposable summand $A$ of $G$ in the above Theorem \ref{finite-rank}  is maximal in the obvious sense and that its existence follows from the fact that $G$ is of finite rank, so $A$ can just be chosen as a completely decomposable summand of maximal rank. This will be the main obstacle in our generalisation  to $\calG$, since
even in the countable case,
the existence of such a maximal completely decomposable summand is
by no means obvious.

We define Main Decompositions as in the finite rank case.

\begin{definition}
  \label{main-definition}
  Let $G\in\calG$.
  A {\it Main Decomposition} of $G$
  is a decomposition $G=A \oplus B$
  in which $A$ is completely decomposable and $B$ is clipped.
\end{definition}

We begin with the uniqueness result.

\subsection{Uniqueness of Main Decompositions}
In this section we will prove uniqueness of the Main Decomposition,
when it exists, of all $G\in\calG$.

\begin{theorem}
  \label{main1}
  Assume that $G\in\calG$ decomposes as $G=A \oplus H$
  where $A$ is completely decomposable and $H$ is clipped.
  Then $A$ is unique up to isomorphism.
\end{theorem}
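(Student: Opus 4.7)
By Baer's classification, a completely decomposable group is determined up to isomorphism by its sequence of type-ranks $\{r_\tau\}_\tau$, so it suffices to show that for any two Main Decompositions $G = A \oplus H = A' \oplus H'$ and every type $\tau$, we have $r_\tau(A) = r_\tau(A')$.

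For each type $\tau$, set $G(\tau) = \{g \in G : \tp_G(g) \geq \tau\}$ and let $G^\sharp(\tau)$ be the pure closure in $G(\tau)$ of $\sum_{\sigma > \tau} G(\sigma)$. The quotient $G^{[\tau]} := G(\tau)/G^\sharp(\tau)$ is a torsion-free module over the rank-one ring $R_\tau$ of type $\tau$, and the assignment $G \mapsto G^{[\tau]}$ is additive on direct sums. For $A = \bigoplus_\sigma A_\sigma$ completely decomposable one checks $A^{[\tau]} \cong A_\tau$, which is free over $R_\tau$ of rank $r_\tau(A)$. Hence $G^{[\tau]} \cong A_\tau \oplus H^{[\tau]} \cong A'_\tau \oplus (H')^{[\tau]}$. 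The heart of the argument is then a key lemma: if $H$ is clipped, then $H^{[\tau]}$ contains no direct summand isomorphic to $R_\tau$. Granting this, both $A_\tau$ and $A'_\tau$ appear as \emph{maximal} free $R_\tau$-direct summands of $G^{[\tau]}$; since $R_\tau$ is a PID and neither $H^{[\tau]}$ nor $(H')^{[\tau]}$ has a free summand, a uniqueness-of-maximal-free-summand argument for torsion-free $R_\tau$-modules forces $r_\tau(A) = r_\tau(A')$. Applying this for every $\tau$ and invoking Baer's theorem gives $A \cong A'$.

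The main technical obstacle is the key lemma. Given a hypothetical splitting $H^{[\tau]} = \bar X \oplus \bar K$ with $\bar X \cong R_\tau$, one lifts a generator $\bar x$ to $x \in H(\tau)$; since $\bar x \neq 0$, the element $x$ has type exactly $\tau$ in $H$, so $X := \langle x \rangle_*^H$ is a pure rank-one subgroup of type $\tau$. One must show that $X$ is itself a direct summand of $H$, which would contradict clippedness. In finite rank, this conversion from a quotient splitting to an honest one is routine, but for arbitrary $G \in \calG$ it is delicate and appears to require a careful construction of a projection $H \to X$ that annihilates both $H^\sharp(\tau)$ and the preimage of $\bar K$ in $H(\tau)$, and that respects the type filtration outside $H(\tau)$. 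This splitting-extension step, together with justifying the uniqueness of the maximal free $R_\tau$-summand in the possibly uncountable setting, is what I expect to be the main hurdle of the proof.
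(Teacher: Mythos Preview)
Your key lemma is false, and this is a genuine gap rather than a technical hurdle. Take any indecomposable rank-$2$ group $H$ with two incomparable types in its typeset, for instance $H=\langle p^{-\infty}a,\,q^{-\infty}b,\,r^{-1}(a+b)\rangle$ for distinct primes $p,q,r$. This $H$ is clipped, yet for $\tau=\tp(a)$ one has $H(\tau)=\langle p^{-\infty}a\rangle$ and $H^{\sharp}(\tau)=0$, so $H^{[\tau]}\cong R_{\tau}$ is free of rank~$1$. Thus ``$H$ clipped $\Rightarrow$ $H^{[\tau]}$ has no free $R_{\tau}$-summand'' fails already in finite rank; the obstruction to splitting $X=\langle x\rangle_{*}$ off $H$ does not live in $H^{[\tau]}$ but in how $H(\tau)$ sits inside $H$. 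The functor $G\mapsto G^{[\tau]}$ simply discards too much information to detect clippedness.

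The paper avoids this by never passing to the quotient $G^{[\tau]}$. Instead it works directly with the two decompositions. Writing $A=\bigoplus_{\tau}A_{\tau}$ and $A'=\bigoplus_{\tau}A'_{\tau}$ into homogeneous pieces, the crucial step is a lemma (nontrivial, reduced to the finite-rank result of Mader--Schultz) that $\bigoplus_{\tau\neq\sigma}A_{\tau}\oplus H$ is $\sigma$-clipped whenever $H$ is clipped and no $A_{\tau}$ with $\tau\neq\sigma$ has type $\sigma$. Granting that, one compares $G=A_{\sigma}\oplus(\text{$\sigma$-clipped})=A'_{\sigma}\oplus(\text{$\sigma$-clipped})$ and shows that the projection $A_{\sigma}\to A'_{\sigma}$ along the second complement is injective: a nonzero kernel element would give a rank-$1$ pure subgroup of the $\sigma$-clipped complement which, by strong separability of $A_{\sigma}$, would split off --- contradiction. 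By symmetry $A_{\sigma}$ and $A'_{\sigma}$ are mono-equivalent, hence of equal rank, hence isomorphic. Note that this projection argument is exactly the ``uniqueness of maximal free $R_{\tau}$-summand'' step you flagged as your second hurdle; the paper proves it directly rather than appealing to PID module theory.
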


In order to prove Theorem \ref{main1} we need   a few definitions and lemmas.  

\begin{definition}
Let $G$ and $G'\in\calG$.  Then
\begin{itemize}
\item $G$ and $G'$ are {\it mono-equivalent} (denoted $G \sim_{mono} G'$)
  if there are monomorphisms $\varphi:G \rightarrow G'$ and $\psi:G'
  \rightarrow G$ (\cite[Chapter 9]{Facchini});
\item a type $\tau$ is called {\it extractable for $G$} if $G$ has a rank one summand of type $\tau$;
\item $G$ is called {\it strongly separable} if every pure subgroup of rank one is a direct summand.
\end{itemize}
\end{definition}

Note that a homogeneous completely decomposable group is strongly separable and it is obvious that two homogeneous completely decomposable groups are mono-equivalent if and only if they are isomorphic since mono-equivalence implies that the groups must have the same rank. 

However, there are easy examples where mono-equivalence does not imply
isomorphism, e.g., $G=\Z \oplus \bigoplus_{\omega}\Q$
and $G'=\bigoplus_{\omega}\Q$ are mono-equivalent but not isomorphic.
Furthermore, for completely decomposable groups
the set of extractable types is exactly the set of
critical types (\cite[Definition 2.4.6]{Mader}).

Before we proceed with proving uniqueness of Main Decompositions we classify the strongly separable groups. Let us note that there are large classes of strongly separable groups, e.g. all dual groups and more generally any homogeneous separable group (see \cite[Lemma 4.10, page 505] {Fuchs} and \cite[Lemma 4.5, page 503]{Fuchs}).

\begin{proposition}
  Let $G\in\calG$ have maximal divisible subgroup $D$.
  $G$ is strongly separable if and only if
  $G/D$ is separable and homogeneous.
\end{proposition}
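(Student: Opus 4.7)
The plan is to work with the decomposition $G = D \oplus G_0$ afforded by divisibility of $D$, where $G_0 \cong G/D$ is reduced, and to reduce the proposition to the intermediate claim that a reduced torsion-free group is strongly separable precisely when it is separable and homogeneous.

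For the direction $(\Leftarrow)$, assume $G_0$ is separable and homogeneous of type $\tau$ and take a pure rank-one subgroup $C \subseteq G$. If $C \cap D \neq 0$ then $C \subseteq D$, so $C \cong \Q$ splits off $D$ and hence off $G$. Otherwise $C \cap D = 0$ and I would use divisibility of $D$ to show that the projection $\pi(C) \subseteq G_0$ is pure: given $x \in G_0$ with $nx = \pi(c)$ for some $c \in C$, the element $c - nx$ lies in $D$, so $c/n \in G$ and purity of $C$ in $G$ forces $c/n \in C$, whence $x \in \pi(C)$. Separability and homogeneity of $G_0$ then give $G_0 = \pi(C) \oplus F$, as any pure rank-one subgroup of $G_0$ is contained in a finite rank homogeneous completely decomposable summand $\tau^n$ and is itself a summand there. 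To lift the splitting back to $G$, define $\psi : \pi(C) \to D$ by $\psi(\pi(c)) = c - \pi(c)$; the map $(d, x) \mapsto (d + \psi(x), x)$ is an automorphism of $D \oplus \pi(C)$ fixing $D$ and carrying $\pi(C)$ onto $C$, so $D \oplus \pi(C) = D \oplus C$ and therefore $G = C \oplus (D \oplus F)$.

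For the direction $(\Rightarrow)$, assume $G$ is strongly separable. Then $G_0$ inherits strong separability by a summand-in-summand argument: a pure rank-one $C \subseteq G_0$ is pure in $G$, so $G = C \oplus E$ yields $G_0 = C \oplus (E \cap G_0)$. It therefore suffices to prove that a reduced strongly separable group $H$ is separable and homogeneous. For homogeneity, assume $g, h \in H$ have distinct types. In the \emph{incomparable} case, set $c = g + h$ and decompose $H = \langle c \rangle_* \oplus E$; any $x = x_C + x_E$ with $x_C \neq 0$ has $\tp(x_C) = \tp(c)$ and hence $\tp(x) \leq \tp(c)$, so any element whose type fails to be $\leq \tp(c)$ lies in $E$. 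A direct height computation at the primes where $\tp(g)$ and $\tp(h)$ disagree shows that neither $\tp(g)$ nor $\tp(h)$ is $\leq \tp(c)$, so $g, h \in E$ and then $c = g + h \in \langle c \rangle_* \cap E = 0$, contradicting $c \neq 0$. In the \emph{comparable} case $\tp(g) < \tp(h)$, the same type argument places $h$ in the complement of $\langle g \rangle_*$, and iterating gives $H = \langle g \rangle_* \oplus \langle h \rangle_* \oplus F$ with $\langle g \rangle_* \oplus \langle h \rangle_* \cong \tp(g) \oplus \tp(h)$; one then exhibits a pure rank-one subgroup $C' \subseteq \tp(g) \oplus \tp(h)$ that is not a direct summand, by selecting an element whose pure closure has type $\tp(g)$ but whose quotient short exact sequence admits no splitting—the vanishing $\Hom(\tp(h), \tp(g)) = 0$ forces any candidate section to have zero $\tp(g)$-component, leaving the splitting condition to demand a $\tp(h)$-coordinate lying outside the rank-one subgroup of $\Q$ of type $\tp(h)$. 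This contradicts the strong separability inherited by $\tp(g) \oplus \tp(h)$. Finally, separability of $H$ is obtained from strong separability plus homogeneity: for any finite subset, inductively split off the pure closure of the image of each element in the current complement, producing a homogeneous completely decomposable finite rank summand of $H$ containing the subset.

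The main obstacle is the construction of the non-splitting witness in the comparable-types case of the homogeneity argument: the explicit element must be chosen by tracking, prime by prime, how the characteristics of $\tp(g)$ and $\tp(h)$ differ, and then checking that the forced rational coordinate genuinely leaves the allowed subgroup of $\Q$ — this prime-level bookkeeping is the technical heart of the argument.
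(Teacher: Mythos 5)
Your route is genuinely different from the paper's: after the same reduction modulo the divisible part, the paper simply quotes Lemma 2.2 of \cite{HH} for homogeneity of a reduced strongly separable group and treats the remaining implications as obvious, whereas you argue everything directly. Your backward direction, the transfer of strong separability to $G/D$, the incomparable-types argument, and the deduction of separability from strong separability plus homogeneity are all correct, and a self-contained proof along these lines would be a genuine gain over the citation.

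The gap sits exactly in the step you defer --- the comparable case --- and it is not mere bookkeeping. The witness you want does not exist in general for $\tp(g)<\tp(h)$: in $\Z\oplus\Q$ every pure rank one subgroup is a direct summand (one with nonzero $\Z$-coordinate equals $\{(x,\lambda x):x\in\Z\}$ for some $\lambda\in\Q$, and $(x,y)\mapsto(x,\lambda x)$ is a retraction), so a group of the form $R_\sigma\oplus R_\tau$ with $\sigma<\tau$ can be strongly separable. Moreover, the primes at which the two characteristics differ are not the relevant ones: in this example they differ at every prime and still no witness exists. What you must use is the reducedness of $H$, which you set up but never invoke: since $\langle h\rangle_*$ is not divisible, there is a prime $q$ with $\chi_{\tp(h)}(q)=k<\infty$, and it is this finiteness, not the disagreement of the characteristics, that produces the obstruction. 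Concretely, choose $a\in\langle g\rangle_*$ and $b\in\langle h\rangle_*$ of $q$-height $0$ and let $C'$ be the pure closure of $(q^{k+1}a,b)$ in $\langle g\rangle_*\oplus\langle h\rangle_*$. Its type is $\tp(g)$, so $\Hom(\langle h\rangle_*,C')=0$ because $\tp(h)\not\leq\tp(g)$; hence any retraction $\varphi$ onto $C'$ vanishes on $\langle h\rangle_*$, giving $(q^{k+1}a,b)=\varphi(q^{k+1}a,b)=q^{k+1}\varphi(a,0)$, i.e.\ this element is divisible by $q^{k+1}$ inside $C'$, while its $q$-height in $\langle g\rangle_*\oplus\langle h\rangle_*$ (hence in $C'$) is $0$ --- a contradiction. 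With this repair, and with reducedness invoked explicitly at this point, your argument is complete.
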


\begin{proof}
Clearly, $G$ is strongly separable if and only if
$G/D$ is strongly separable,
so we may assume that $G$ is reduced.
Obviously, any separable and homogeneous group is strongly separable.

Conversely, assume that $G$ is a strongly separable group. Then $G$ has to be homogeneous by Lemma 2.2. from \cite{HH}.
\end{proof}

We remark that for separable groups with linearly ordered typeset we at least get that every pure subgroup of finite rank is a quasi-summand by \cite{Cornelius}. These groups are called {quasi-separable}.

\begin{lemma}
  \label{lem-homogeneous}
  Let $G\in\calG$.
  Suppose that $G=D \oplus B = A \oplus C$
  where both $B$ and $C$ are $\tau$-clipped
  for all extractable types of $A$ and $D$.
  Moreover, assume that
  $A$ and $D$ are strongly separable torsion-free groups.
  Then  $D\sim_{mono} A$.
\end{lemma}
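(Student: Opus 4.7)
The plan is to prove mono-equivalence by showing that the two canonical projection maps are injective. Concretely, let $\pi_A\colon G\to A$ and $\pi_C\colon G\to C$ denote the projections associated to the decomposition $G=A\oplus C$, and $\pi_D,\pi_B$ those associated to $G=D\oplus B$. I will show that $\pi_A|_D\colon D\to A$ is a monomorphism; then by symmetry $\pi_D|_A\colon A\to D$ is also a monomorphism, giving $D\sim_{mono}A$.

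To see injectivity of $\pi_A|_D$, suppose for contradiction that $0\ne d\in D\cap C$. Since $D$ is strongly separable, the pure closure $L:=\langle d\rangle_*^D$ is a rank-one direct summand of $D$, and so, combining with $G=D\oplus B$, a rank-one direct summand of $G$; write its type as $\tau$. By definition $\tau$ is extractable for $D$, so the hypothesis on $C$ gives that $C$ is $\tau$-clipped. On the other hand, because $D$ and $C$ are both pure in $G$, the purification of $d$ inside $D$ coincides with its purification in $G$, and the latter is contained in $C$; thus $L\subseteq C$. The main step is now a modular-law argument: since $G=L\oplus Y$ for some $Y$, any $c\in C$ can be uniquely written as $c=\ell+y$ with $\ell\in L\subseteq C$, forcing $y=c-\ell\in C\cap Y$; together with $L\cap Y=0$ this gives $C=L\oplus(C\cap Y)$. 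Hence $L$ is a rank-one summand of $C$ of type $\tau$, contradicting $\tau$-clippedness of $C$.

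The step I expect to require the most care is the passage from ``$L$ is a summand of $G$ contained in $C$'' to ``$L$ is a summand of $C$''; this is purely formal once one uses purity of the summands, but it is the pivot of the argument. One small preliminary point that should be mentioned: strong separability guarantees that every nonzero element of $D$ (in particular any hypothetical kernel element) lies in a pure rank-one summand of $D$, and that summand has a well-defined type which is automatically extractable for $D$—so the hypothesis on $C$ applies. The same reasoning then runs verbatim with the roles of $(A,C)$ and $(D,B)$ interchanged to give injectivity of $\pi_D|_A$, completing the proof.
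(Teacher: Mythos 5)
Your proposal is correct and follows essentially the same route as the paper: show the projection of $D$ along $C$ is injective by taking a nonzero kernel element, using strong separability to extract a rank-one summand of extractable type inside $C$, and contradicting $\tau$-clippedness, then conclude by symmetry. The only difference is that you spell out the modular-law step (a summand of $G$ contained in $C$ is a summand of $C$) which the paper asserts without detail.
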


\begin{proof}
We need to show that there are two monomorphisms mapping $D$ into $A$
and $A$ into $D$ respectively.  Let $\alpha: D \rightarrow A$ be the
projection along $C$.  We claim that $\alpha$ is injective.  Assume
that $x\alpha=0$ for some non-zero $x \in D$.  By strong separability
of D, $\left< x \right>_*$ is a direct summand of $D$, so the type
$\tau$ of $x$ is an extractable type of $D$ and hence $C$ is
$\tau$-clipped.  However, $x\alpha=0$ implies that $\left< x \right>_*
\subseteq ker(\alpha)=C$ which is then a summand of $C$, a
contradiction.  Thus $\alpha$ is monic and by symmetry also the
projection $\delta:G \rightarrow D$ along $C$ is monic.
Hence $A \sim_{mono} D$.
\end{proof}

We have an immediate corollary.

\begin{corollary}
  \label{cor-homogeneous}
  Let $G\in\calG$ have decompositions $G=D \oplus B = A \oplus C$
  where $B$ and $C$ are $\tau$-clipped
  and $A$ and $D$ are $\tau$-homogeneous completely decomposable
  for some type $\tau$.
  Then $A \cong D$.
\end{corollary}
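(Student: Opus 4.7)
The plan is to deduce the corollary directly from Lemma~\ref{lem-homogeneous} together with the remark immediately following the definition of mono-equivalence.

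First I would verify that the hypotheses of Lemma~\ref{lem-homogeneous} are satisfied. Both $A$ and $D$ are homogeneous completely decomposable, and the paper notes that any homogeneous completely decomposable group is strongly separable; so the strong separability assumption in Lemma~\ref{lem-homogeneous} holds. Next I need that $B$ and $C$ are $\sigma$-clipped for every extractable type $\sigma$ of $A$ or $D$. Here I invoke the remark that for completely decomposable groups the set of extractable types coincides with the set of critical types. Since $A$ and $D$ are $\tau$-homogeneous completely decomposable, their only critical type (and hence only extractable type) is $\tau$ itself. Therefore the $\tau$-clippedness of $B$ and $C$, which is given by hypothesis, is exactly what Lemma~\ref{lem-homogeneous} requires.

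Applying Lemma~\ref{lem-homogeneous} then yields $A \sim_{mono} D$. To upgrade this to an isomorphism I would use the observation made in the paragraph just before the proposition: two homogeneous completely decomposable groups that are mono-equivalent must have the same rank (since mono-equivalence preserves rank), and two $\tau$-homogeneous completely decomposable groups of the same rank are isomorphic. This gives $A \cong D$, completing the proof.

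I do not expect any serious obstacle here; the corollary is essentially a packaging of Lemma~\ref{lem-homogeneous} in the special homogeneous case. The only point that requires a moment's thought is the identification of the extractable types of $A$ and $D$ with $\{\tau\}$, which is the step that both justifies the application of the lemma and makes the $\tau$-clippedness hypothesis on $B$ and $C$ sufficient rather than needing a stronger clippedness assumption.
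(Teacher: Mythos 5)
Your proposal is correct and follows essentially the same route as the paper: apply Lemma~\ref{lem-homogeneous} via the strong separability of homogeneous completely decomposable groups, then upgrade mono-equivalence to isomorphism using the remark that mono-equivalent $\tau$-homogeneous completely decomposable groups have equal rank and are therefore isomorphic. Your explicit identification of the extractable types of $A$ and $D$ with $\{\tau\}$ is a detail the paper leaves implicit, but it is the same argument.
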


\begin{proof}
It suffices to note that $A$ and $D$ are strongly separable
and hence Lemma \ref{lem-homogeneous} gives that $A \sim_{mono} D$.
But this implies already that $A \cong D$.
\end{proof}

\begin{lemma}
  \label{lem-inhomogeneous}
  Let $G\in\calG$ have decomposition $G=A \oplus B$ where $A$ is
  completely decomposable and both \(A\) and $B$ are $\tau$-clipped.
  Then $G$ is $\tau$-clipped as well.
\end{lemma}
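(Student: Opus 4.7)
The plan is a proof by contradiction. Suppose $G = X \oplus Y$ for some rank-1 summand $X$ of type $\tau$, and let $\pi_A, \pi_B \colon G \to A, B$ denote the projections along $G = A \oplus B$, and $\pi_X \colon G \to X$ the projection along $G = X \oplus Y$. The goal is to show that $\pi_B$ carries $X$ isomorphically onto a direct summand of $B$, producing a rank-1 summand of $B$ of type $\tau$ and contradicting the $\tau$-clippedness of $B$.

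First I would fix a nonzero $x_0 \in X$, write $x_0 = a_0 + b_0$ with $a_0 = \pi_A(x_0)$ and $b_0 = \pi_B(x_0)$, and observe that $a_0$ inherits divisibility from $x_0$: whenever $x_0/n \in X$ then $a_0/n = \pi_A(x_0/n) \in A$, so the type of $a_0$ computed in $A$ is at least $\tau$.

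The crucial step is to exploit complete decomposability. Write $A = \bigoplus_{i \in I} A_i$ with each $A_i$ of rank 1 and type $\tau_i$; the $\tau$-clippedness of $A$ forces $\tau_i \neq \tau$ for every $i$. Decompose $a_0 = \sum_{i \in I_0} a_i$ over a finite $I_0$ with nonzero $a_i \in A_i$. Since the type of an element in a direct sum of rank-1 groups equals the infimum of the types of its nonzero components, $\inf_{i \in I_0} \tau_i \geq \tau$, and together with $\tau_i \neq \tau$ this forces $\tau_i > \tau$ strictly for all $i \in I_0$. Consequently $\Hom(A_i, X) = 0$ for each such $i$, since a nonzero homomorphism between rank-1 torsion-free groups demands source type at most target type; therefore $\pi_X(a_0) = \sum_{i \in I_0} \pi_X(a_i) = 0$.

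The rest is a formal chase. From $\pi_X(x_0) = x_0$ and $\pi_X(a_0) = 0$ we get $\pi_X(b_0) = x_0$; the same reasoning applies to every $x \in X$, since its $A$-part is a rational multiple of $a_0$ inside $A$ and is again killed by $\pi_X$ by torsion-freeness. Thus $(\pi_X|_B) \circ (\pi_B|_X) = \mathrm{id}_X$, so $\pi_B|_X \colon X \to B$ is a split monomorphism and $\pi_B(X)$ is a direct summand of $B$ isomorphic to $X$, of type $\tau$, contradicting $B$ being $\tau$-clipped. The main obstacle, and the step where complete decomposability of $A$ is indispensable, is the type-inequality chain $\tau_i > \tau \Rightarrow \Hom(A_i, X) = 0$; without complete decomposability of $A$ one has no structural way to make the composite $\pi_X \circ \pi_A|_X$ vanish.
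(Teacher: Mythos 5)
Your proof is correct, but it takes a different route from the paper. The paper disposes of this lemma by citation: it invokes Lemma 2.4 of Mader--Schultz (stated there for finite rank), observes that its proof never uses finiteness of the rank of $B$, and then reduces the case of infinite-rank $A$ to finite-rank summands $A'$ of $A$ via the observation that $G$ is $\tau$-clipped iff $A'\oplus B$ is $\tau$-clipped for every finite-rank summand $A'$ of $A$. You instead give a self-contained argument: assuming $G=X\oplus Y$ with $X$ of rank $1$ and type $\tau$, you use that the $A$-component of a generator of $X$ has finite support in a homogeneous decomposition $A=\bigoplus_i A_i$, that $\tau$-clippedness of the completely decomposable $A$ forces every relevant $\tau_i>\tau$, hence $\Hom(A_i,X)=0$, so $\pi_X\circ\pi_A$ vanishes on $X$ and $\pi_X|_B\circ\pi_B|_X=\mathrm{id}_X$, exhibiting $\pi_B(X)$ as a rank-$1$ type-$\tau$ summand of $B$ — a contradiction. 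All the steps check (including the torsion-freeness argument extending the vanishing of $\pi_X\pi_A$ from $x_0$ to all of $X$, and the standard fact that a one-sided inverse yields $B=\pi_B(X)\oplus\ker(\pi_X|_B)$). What your approach buys is independence from the external finite-rank lemma and from the reduction step: the finiteness needed is supplied automatically by the finite support of elements of $A$, so arbitrary rank is handled uniformly. What the paper's approach buys is brevity, at the cost of leaning on the cited result.
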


\begin{proof}
This follows immediately from \cite[Lemma 2.4]{MaSc} which is stated
for finite rank groups, but the proof does not use that \(B\) is of
finite rank.
We just have to notice that $G$ is $\tau$-clipped if and only if
$G'=A' \oplus B$ is $\tau$-clipped for every finite rank summand $A'$
of $A$.
\end{proof}

We are now ready to prove Theorem \ref{main1}.
\begin{proof}[Proof of Theorem \ref{main1}]
Assume that $G\in\calG$ has a decomposition $G=A \oplus H$ where $A$ is completely decomposable and $H$ is clipped. We have to show that $A$ is unique up to isomorphism. 

Assume that there is a second decomposition $G=A' \oplus H'$ where
$A'$ is completely decomposable and $H'$ is clipped. Let
$A=\bigoplus\limits_{\tau}A_{\tau}$ and
$A'=\bigoplus\limits_{\tau}A'_{\tau}$ be the corresponding
decompositions of $A$ and $A'$ into $\tau$-homogeneous summands. Fix
an arbitrary type $\sigma$. We rearrange the summands and consider
$G=A_{\sigma} \oplus \left( \bigoplus\limits_{\tau
    \not=\sigma}A_{\tau} \oplus H \right)= A'_{\sigma} \oplus \left(
  \bigoplus\limits_{\tau \not=\sigma}A'_{\tau} \oplus H' \right)$. By
Lemma \ref{lem-inhomogeneous} the groups $ \bigoplus\limits_{\tau
  \not=\sigma}A_{\tau} \oplus H $ and $  \bigoplus\limits_{\tau
  \not=\sigma}A'_{\tau} \oplus H' $ are both $\sigma$-clipped.
Hence $A_{\sigma} \cong A'_{\sigma}$ by Corollary
\ref{cor-homogeneous}.
Since this holds for all $\sigma$, we obtain $A \cong A'$.
\end{proof}

Known extensions of the concept of  near-isomorphism from finite rank
groups to $\calG$, for example \cite{neariso} are not strong enough to
show uniqueness of the clipped components in Main Decompositions up to
near isomorphism.
Instead, we define groups $G$ and $G'\in\calG$ to be
\textit{sufficiently isomorphic}, denoted $G\cong_{suf}G'$ if there
exists a completely decomposable group $C$ such that $G\oplus C\cong
G'\oplus C$.
This is motivated by the classical result that for almost
completely decomposable abelian groups of finite rank being
nearly-isomorphic is equivalent to being sufficiently isomorphic
(see \cite[Lemma 9.1.10]{Mader}).
Thus the clipped components in Main Decompositions are sufficiently
isomorphic but properties of sufficiently isomorphic groups are yet to
be investigated. Nevertheless, we have the following result.

\begin{lemma}
\label{mono}
 Let $G\in\calG$ such that $G=A \oplus H$ where $A$ is homogeneous completely decomposable and $H$ is clipped. If $G$ has a second Main Decomposition $G=A' \oplus H'$ with $A \cong A'$, then $H $ and $ H'$ are mono-equivalent.

\end{lemma}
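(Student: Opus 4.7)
The plan is to produce the two required monomorphisms directly from the canonical coordinate projections attached to the two Main Decompositions. Write $\tau$ for the type of $A$, so that $A$ and $A' \cong A$ are both $\tau$-homogeneous completely decomposable, and therefore strongly separable. Let $\pi : G \to H$ be the projection along $A$ and $\pi' : G \to H'$ the projection along $A'$. The goal is to show that $\pi'|_H : H \to H'$ and $\pi|_{H'} : H' \to H$ are both monomorphisms, which is exactly mono-equivalence; by the complete symmetry between the two decompositions it suffices to establish one of them.

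To show that $\pi'|_H$ is injective I would argue by contradiction. Suppose $0 \neq h \in H$ with $\pi'(h) = 0$, that is, $h \in H \cap A'$. Consider the pure rank-one subgroup $P := \langle h \rangle_*^G$. Since $H$ and $A'$ are both pure in $G$ (being summands), $P$ sits inside $H \cap A'$. Strong separability of $A'$ then forces $P$ to be a direct summand of $A'$, and hence of $G$. The final combinatorial step is the standard modular-law observation that a direct summand of $G$ contained in another summand $H$ of $G$ is automatically a summand of $H$: if $G = P \oplus Q$ and $G = H \oplus K$, then any $h \in H$ splits as $h = p + q$ with $p \in P \subseteq H$ and $q = h - p \in H \cap Q$, so $H = P \oplus (H \cap Q)$. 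Thus $P$ is a rank-one summand of $H$, contradicting the clippedness of $H$. Hence $\ker(\pi'|_H) = 0$, and by symmetry $\pi|_{H'}$ is injective as well.

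The only non-bookkeeping ingredient is the strong separability of $A'$, which is immediate from the hypothesis that $A'$ is homogeneous completely decomposable (it is homogeneous and separable, so strongly separable by the characterization proved above). The rest of the argument is a clean application of purity and the modular law, so I do not foresee any serious obstacle; none of the heavier machinery from the uniqueness proof of Theorem \ref{main1} (e.g.\ Lemma \ref{lem-inhomogeneous}) needs to be invoked, beyond the hypothesis $A \cong A'$ itself.
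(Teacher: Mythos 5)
Your proof is correct and follows essentially the same route as the paper: both show that the canonical projections restricted to $H$ and $H'$ are monic by taking a kernel element, purifying it inside the (strongly separable, homogeneous completely decomposable) complement, and using the modular law to exhibit a rank-one summand of the clipped part, a contradiction. The only cosmetic difference is which decomposition's homogeneity you invoke first, which is immaterial by symmetry.
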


\begin{proof}
We claim that the canonical projections $\pi_H:G \rightarrow H$ and
$\pi_{H'}: G \rightarrow H'$ restricted to $H'$ and $H$ respectively,
are monic. This will then imply that $H \sim_{mono} H'$. Assume that
$\pi_H(h')=0$ for some $0 \not= h' \in H'$. Then $h' \in A$. Since $A$
is homogeneous completely decomposable we conclude that $\left< h'
\right>_*$ is a direct summand of $A$. So we have $G=A_1 \oplus \left<
h' \right>_* \oplus H$ where $A=A_1 \oplus \left< h' \right>_*$. By
the modular law we conclude $H'=H' \cap G = \left< h' \right>_* \oplus
H' \cap (A_1 \oplus H)$ and hence $H'$ has a direct summand of rank
$1$ contradicting the assumption that $H'$ is clipped. Therefore,
$\pi_H \restriction H'$ and by symmetry also $\pi_{H'} \restriction H$
are monic which shows $H \sim_{mono} H'$ (note that $A'$ is also
homogeneous completely decomposable since $A \cong A'$ by Theorem
\ref{main1}).
\end{proof}

Note that in Lemma \ref{mono} the completely decomposable
part is assumed to be homogeneous.  It will be shown in Theorem
\ref{rank complement} that in general at least the rank of the clipped
component in any Main Decomposition is uniquely determined.
Nevertheless, mono-equivalence gives a little more in the above Lemma
\ref{mono}.  We close this section with an open question.

\begin{question}
  Given two Main Decompositions $G=A \oplus H=A' \oplus H'$ with $A$
  and $A'$ completely decomposable and $H, H'$ clipped.
  Is it true that
  $H$ is indecomposable if and only if $H'$ is indecomposable?
\end{question}

\subsection{Existence of Main Decompositions}

It remains to investigate for torsion-free abelian groups of infinite
rank whether we can find a completely decomposable summand such that its
complement is clipped.  We start by remarking that we cannot hope for
a positive answer in general for groups of arbitrary size.  For
example, if $\kappa$ is an infinite cardinal, then the Specker group
$\bbZ^{\kappa}$, and more generally any vector group (\cite[Section
96]{Fuchs}) of rank $>\kappa$ has no clipped direct summand.

However, there are clearly many torsion-free groups of infinite rank
that admit a Main Decomposition, for example every indecomposable
group of rank $>1$ and every super decomposable group is clipped.

Let us note that it is not even known if groups that have the
Krull-Schmidt property, i.e. groups that have up to isomorphism only
one decomposition into indecomposable summands, need to have a Main
Decomposition.

We now state a first positive result.   The {\it length} of a decomposition of a torsion-free group $G$ is defined to be the number of summands.

\begin{proposition}
\label{finite-endo-rank}
Let $G$ be a torsion-free abelian group of infinite rank for which the length of any direct decomposition into indecomposable summands is bounded by some fixed integer. Then $G$ has a Main Decomposition  $G=A \oplus H$ where $A$ is completely decomposable of finite rank and $H$ is clipped.
\end{proposition}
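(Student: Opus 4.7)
The plan is to strip off rank-1 direct summands one at a time and argue by induction on the integer bound $n$ from the hypothesis.

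The key observation is: if $G = R \oplus K$ with $R$ of rank $1$, then $R$ is indecomposable, and any decomposition $K = K_1 \oplus \cdots \oplus K_\ell$ into indecomposable summands extends to $G = R \oplus K_1 \oplus \cdots \oplus K_\ell$, a decomposition of $G$ into indecomposable summands of length $\ell+1$. Consequently $K$ itself satisfies the hypothesis with the bound reduced to $n-1$.

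I would then induct on $n$. If $G$ is clipped, take $A := 0$ and $H := G$ and we are done. Otherwise, choose a rank-1 direct summand $R$ of $G$ and write $G = R \oplus K$; note that $K$ again has infinite rank since $R$ has rank $1$ and $G$ has infinite rank. By the key observation, $K$ satisfies the hypothesis with bound at most $n-1$, so the induction hypothesis furnishes a Main Decomposition $K = A' \oplus H'$ in which $A'$ is completely decomposable of finite rank and $H'$ is clipped. Combining, $G = (R \oplus A') \oplus H'$ is a Main Decomposition of $G$, with completely decomposable part $R \oplus A'$ of finite rank $1 + \mathrm{rk}(A') \leq n$.

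The main obstacle is making sure the induction bottoms out properly. The delicate case is $n=0$, where one must argue directly that $G$ is forced to be clipped: if $G$ had a rank-1 summand $R$ with complement $K$, the key observation would force $K$ to admit no decomposition into indecomposable summands at all, yet the existence of $R$ together with the hypothesis that the complement must eventually break into either a super-decomposable (hence clipped) tail or an indecomposable piece of positive length produces either the Main Decomposition directly (with $A=R$ and $H$ the clipped complement) or an indecomposable decomposition of $G$ of positive length, contradicting $n=0$. Once the base case is settled, the inductive construction above proceeds cleanly and yields a completely decomposable part of finite rank bounded above by $n$, as claimed.
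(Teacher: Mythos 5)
Your inductive step is correct, and the key observation (a rank-one summand drops the bound from $n$ to $n-1$) is fine; the genuine gap is at the bottom of the induction, exactly where you flag it. With the hypothesis read as you read it (only decompositions \emph{into indecomposable summands} are bounded), the case $n=0$ means precisely that $G$ admits no decomposition into indecomposable summands at all, i.e.\ the bound is vacuous. Such a group need not be clipped --- nothing in the $n=0$ hypothesis excludes a rank-one summand (think of a rank-one group plus a superdecomposable complement) --- so ``$G$ is forced to be clipped'' is the wrong target; what you would actually have to prove is that every infinite-rank group with no decomposition into indecomposables still has a Main Decomposition. Your argument for this rests on ``the hypothesis that the complement must eventually break into either a super-decomposable (hence clipped) tail or an indecomposable piece of positive length,'' but no such hypothesis appears in the proposition, and that dichotomy is essentially the statement to be proved (the paper explicitly records the general existence question for Main Decompositions as open). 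The same difficulty re-enters your inductive step if one insists the hypothesis have content: nothing guarantees that the complement $K$ of a rank-one summand again admits a decomposition into indecomposable summands, so repeatedly stripping rank-one summands may never reach a stage where a meaningful induction hypothesis applies.

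The paper's own proof avoids any induction and hence any base case: among all decompositions of $G$ into indecomposable summands, the sum of the rank-one summands has rank at most $n$, so one chooses a decomposition for which this rank is maximal, calls that sum $A$ and the sum of the remaining indecomposable summands $H$; maximality then forces $H$ to be clipped. Note also that under the stronger reading used in the paper's Summary --- \emph{all} direct decompositions of $G$ have length at most $n$ --- your stripping argument does go through verbatim, because every summand inherits that bound and the base case becomes the trivial $n=1$ ($G$ indecomposable, hence clipped, since its rank is infinite). As written, however, with the bound imposed only on decompositions into indecomposables, the bottom of your induction is not secured, and that is where the proposal fails.
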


\begin{proof}
Let $n$ be an upper bound on the length of decompositions of \(G\).
For each decomposition into indecomposable summands, say $G=G_1 \oplus
\dotsb \oplus G_{n_i}$, put $A_i=\bigoplus\limits_{\rk(G_j)=1}G_j$ the
direct sum of all rank \(1\) summands.
Clearly, $A_i$ is of finite rank.
Now choose $A$ among $A_1, A_2, \dotsc $ of maximal (finite) rank.
Then obviously $G=A \oplus H$ where $H$ is clipped.
\end{proof}
  
  \begin{corollary}\label{finite rank End}
  Let $G\in\calG$
  such that its endomorphism ring $\End(G)$ has finite rank.
  Then $G$ has a Main Decomposition $A\oplus H$
  where $A$ is completely decomposable of finite rank
  and $H$ is clipped.
 \end{corollary}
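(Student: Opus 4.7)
The plan is to reduce to Proposition \ref{finite-endo-rank} by showing that every direct decomposition of $G$ into nonzero summands has length at most $n := \rk \End(G)$. The key observation is that $\End(G)$ is a torsion-free abelian group (since $G$ is), and that pairwise orthogonal nonzero idempotents in such a ring are $\Z$-linearly independent.

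In detail: given $G = G_1 \oplus \dotsb \oplus G_k$ with all $G_i \neq 0$, the canonical projections $e_1, \dotsc, e_k \in \End(G)$ are pairwise orthogonal and nonzero. From any relation $\sum_i a_i e_i = 0$ and any nonzero $g_j \in G_j$ one obtains $a_j g_j = \sum_i a_i e_i(g_j) = 0$, which forces $a_j = 0$ since $G$ is torsion-free. Hence $e_1, \dotsc, e_k$ are $\Z$-linearly independent elements of $\End(G)$, and therefore $k \leq n$.

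This bound implies that $G$ decomposes into indecomposable summands: starting from any decomposition, iteratively split a decomposable summand; the procedure terminates because the total number of summands never exceeds $n$. Every such decomposition into indecomposable summands has length at most $n$. If $G$ has infinite rank, Proposition \ref{finite-endo-rank} now delivers the desired Main Decomposition, and the completely decomposable summand $A$ has rank at most $n$. If $G$ has finite rank, Theorem \ref{finite-rank} applies directly and $A \subseteq G$ is automatically of finite rank. The entire argument hinges on the linear independence of orthogonal idempotents in a torsion-free ring; once that is in hand, the corollary follows immediately from the preceding results with no real obstacle.
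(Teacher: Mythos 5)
Your proof is correct, but it reaches the hypothesis of Proposition \ref{finite-endo-rank} by a different and more elementary route than the paper. The paper's proof imports Lady's argument (via Lemmas 6.1 and 6.8 in Fuchs) to conclude that $G$ has only finitely many non-isomorphic decompositions into indecomposable summands, each of finite length, and then invokes Proposition \ref{finite-endo-rank}. You instead observe directly that $\End(G)$ is torsion-free and that the projections attached to any decomposition $G=G_1\oplus\dotsb\oplus G_k$ with all $G_i\neq 0$ are $\Z$-independent idempotents (the evaluation argument $\sum_i a_ie_i=0 \Rightarrow a_jg_j=0$ is fine), so every decomposition has length at most $n=\rk\End(G)$; this is stronger than what Proposition \ref{finite-endo-rank} literally asks for and also yields, by your iterated-splitting argument, the existence of a decomposition into indecomposables, which the Proposition's proof tacitly needs. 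What the paper's route buys is the extra structural information of Lady's theorem (finitely many non-isomorphic indecomposable decompositions), which is not needed for the corollary; what your route buys is a self-contained argument with the explicit bound $n$ on lengths, plus an explicit treatment of the finite-rank case via Theorem \ref{finite-rank} (the Proposition is stated only for infinite rank), a case the paper's proof passes over silently. So the proposal stands as a complete, somewhat more economical proof of the corollary.
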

  
\begin{proof}
Under the assumptions $G$ has only finitely many non-isomorphic
decompositions into indecomposable summands and each of these
decompositions is finite, i.e. is a finite direct sum of
indecomposable summands.  This follows as in Lady's proof for the
finite rank case using Lemmas 6.1 and 6.8 in \cite[page 448]{Fuchs}.
Now Proposition \ref{finite-endo-rank} applies.
\end{proof}

It is an open question whether the clipped complement in a Main
Decomposition is unique in any sense but the next lemma shows that at
least its rank is unique.

\begin{theorem}
  \label{rank complement}
  Let $G$ be an infinite torsion-free abelian group and
  $G=A \oplus H$ a Main Decomposition
  with $A$ completely decomposable and $H$ clipped.
  Then in any Main Decomposition $G=A' \oplus H'$ of $G$
  with $A'$ completely decomposable and $H'$ clipped,
  the rank of $H'$ is equal to the rank of $H$.
\end{theorem}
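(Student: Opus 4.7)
My plan is to reduce the theorem to repeated applications of Lemma~\ref{mono} together with cardinal arithmetic. By Theorem~\ref{main1} we have $A \cong A'$, so $\alpha := \rk(A) = \rk(A')$. The cardinal equation $\rk(G) = \alpha + \rk(H) = \alpha + \rk(H')$ already yields $\rk(H) = \rk(H')$ when $\alpha$ is finite, and also whenever $\max(\rk(H), \rk(H')) > \alpha$ (both must then equal $\rk(G)$). I may therefore assume $\alpha$ is infinite and $\rk(H), \rk(H') \le \alpha$.

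Next I decompose $A = \bigoplus_\sigma A_\sigma$ and $A' = \bigoplus_\sigma A'_\sigma$ into their type-homogeneous completely decomposable summands, writing $A^\sigma := \bigoplus_{\tau \ne \sigma} A_\tau$ and ${A'}^\sigma$ analogously; by the proof of Theorem~\ref{main1}, $A_\sigma \cong A'_\sigma$ for each type $\sigma$. Inspecting the proof of Lemma~\ref{mono}, its conclusion continues to hold if the clipped hypothesis on the complements is relaxed to merely being $\sigma$-clipped, where $\sigma$ is the common type of the homogeneous parts: the contradiction produced there is precisely a rank-one summand of type $\sigma$. Applied to the rearrangements $G = A_\sigma \oplus (A^\sigma \oplus H) = A'_\sigma \oplus ({A'}^\sigma \oplus H')$ (with $\sigma$-clipped complements by Lemma~\ref{lem-inhomogeneous}), for each extractable type $\sigma$ of $A$, this yields
\[
A^\sigma \oplus H \;\sim_{mono}\; {A'}^\sigma \oplus H',
\]
whence $\rk(A^\sigma) + \rk(H) = \rk(A^\sigma) + \rk(H')$. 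If some such $\sigma$ satisfies $\rk(A^\sigma) < \max(\rk(H), \rk(H'))$, cardinal arithmetic immediately forces $\rk(H) = \rk(H')$.

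The residual case is where $\rk(A^\sigma) \ge \max(\rk(H), \rk(H'))$ for every extractable $\sigma$. I would handle it by bounding the kernels of the canonical projections $\pi_H|_{H'} \colon H' \to H$ and $\pi_{H'}|_H \colon H \to H'$. Strong separability of each $\sigma$-homogeneous completely decomposable summand $A'_\sigma$ together with $H$ being clipped gives $H \cap A'_\sigma = 0$ for every $\sigma$: for $0 \ne x \in H \cap A'_\sigma$, the subgroup $\langle x \rangle_*^G$ would be a rank-one summand of $A'_\sigma$, hence of $G$, contained in $H$, contradicting that $H$ is clipped. Projecting $A' \to {A'}^\sigma$ is therefore injective on $H \cap A'$, so $\rk(H \cap A') \le \inf_\sigma \rk(A^\sigma)$, and symmetrically $\rk(H' \cap A) \le \inf_\sigma \rk(A^\sigma)$. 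Plugging these bounds into the short exact sequence $0 \to H \cap A' \to H \to \pi_{H'}(H) \to 0$ (with $\pi_{H'}(H) \hookrightarrow H'$) and its symmetric counterpart, combined with the mono-equivalences above, should close the argument.

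The main obstacle I anticipate is the cardinal arithmetic of this residual case: the projection bound gives $\rk(H) \le \rk(H \cap A') + \rk(H')$, which forces $\rk(H) = \rk(H')$ only when one of the cardinals strictly exceeds $\inf_\sigma \rk(A^\sigma)$. Handling the case where $\rk(H)$ and $\rk(H')$ are both simultaneously bounded above by every $\rk(A^\sigma)$ will require combining the mono-equivalences from different extractable types $\sigma$ with the intersection bounds in a more delicate way.
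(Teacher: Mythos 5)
Your reductions are sound as far as they go: the cases of finite $\rk(A)$, of $\max(\rk H,\rk H')>\rk(A)$, and of some extractable type $\sigma$ with $\rk(A^\sigma)<\max(\rk H,\rk H')$ are correctly dispatched, and your observation that the proof of Lemma \ref{mono} survives weakening ``clipped'' to ``$\sigma$-clipped'' is correct. But the residual case --- $A$ inhomogeneous with $\rk(A^\sigma)\geq\max(\rk H,\rk H')$ for every extractable $\sigma$ --- is exactly the heart of the theorem, and what you offer there does not close it, as you yourself acknowledge. The intersection bound $\rk(H\cap A')\leq\inf_\sigma\rk(A^\sigma)$ is vacuous in this case (it is weaker than the trivial bound $\rk(H\cap A')\leq\rk H$), and the exact sequence $0\to H\cap A'\to H\to\pi_{H'}(H)\to 0$ only yields $\rk H\leq\rk(H\cap A')+\rk H'$, which is perfectly compatible with, say, $\rk H=\aleph_0$, $\rk H'=5$ and $A$ having two homogeneous components each of rank $\aleph_1$: nothing in your argument excludes this configuration, since a subgroup of $A'$ meeting every homogeneous component trivially can still have rank as large as $\rk(A')$ (diagonal-type subgroups). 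So this is a genuine gap, not a technicality to be smoothed over.

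The missing idea is an exchange/counting argument that bypasses the homogeneous components entirely. The paper argues directly: assume $\rk(H')>\kappa=\rk(H)$ with $\kappa$ infinite; since each element of $H$ involves only finitely many rank-one summands of $A'$, one captures $H$ inside $A_1'\oplus H'$ with $A'=A_1'\oplus A_2'$ and $\rk(A_1')\leq\kappa$, so $A_1'\oplus H'=H\oplus C$; from $A\cong G/H\cong C\oplus A_2'$ and the fact that summands of completely decomposable groups are completely decomposable, $C$ is completely decomposable; capturing $A_1'$ in turn inside $H\oplus C'$ with $C=C'\oplus C''$, $\rk(C')\leq\kappa$, and writing $H\oplus C'=A_1'\oplus T$ gives $H'\cong T\oplus C''$ with $\rk(T)\leq\kappa<\rk(H')$, so $C''\neq 0$ and $H'$ acquires a rank-one summand, contradicting clippedness; the finite-$\kappa$ case reduces to Theorem \ref{finite-rank}. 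Note that this route never uses Theorem \ref{main1}. To salvage your approach you would need a substitute for this capture-and-exchange step in the residual case, and it is not visible in what you have written.
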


\begin{proof}
Let $\kappa=\rk(H)$.
We may assume for a contradiction
without loss of generality that \(\rk(H') > \kappa\).
Let us assume for now that \(\kappa\) is infinite.
By assumption there is a summand $A_1'$ of $A'=A_1' \oplus A_2'$ of
rank at most $\kappa$ such that $H$ is a summand of $A_1' \oplus H'$,
say $A_1' \oplus H'=H \oplus C$.  Now,
\[A \cong G/H \cong (A_1' \oplus A_2' \oplus H')/H \cong C \oplus
  A_2'\] is completely decomposable.
It follows that $C$ is completely decomposable as well since it is a
summand of a completely decomposable group.
Since $\rk(A_1')$ is at most $\kappa$ there is a direct summand $C'$
of $C=C' \oplus C''$ of rank at most $\kappa$ such that $A_1'$ is a
direct summand of $H \oplus C'$, say $H \oplus C'=A_1' \oplus T$. It
follows that
\[ H' \cong (H \oplus C)/A_1'  = (A_1' \oplus T \oplus C'')/A_1'  = T \oplus C'' \]
and since $\rk(H') > \rk(H)$ but $\rk(T) \leq \kappa$ we conclude that
$C'' \not= \{0\}$ and thus $H'$ is not clipped - a contradiction.

For finite \(\kappa\),
replacing ``at most \(\kappa\)'' with ``finite''
in the argument above,
we obtain that \(H' = T\) has finite rank, too, and
\(A'_{1} \oplus H' = C \oplus H\) are two Main Decompositions of a
finite rank group.
By Theorem~\ref{finite-rank}, \(H\) and \(H'\) have the same rank.
\end{proof}
    
We now turn our attention to countable groups in $\calG$.
A first step is the homogeneous case. This is motivated by a classical
result due to Stein (see \cite[Corollary 8.3, p. 114]{Fuchs}) which
says that for a given countable torsion-free group $G$,
there is a decomposition $G=F \oplus H$
with $F$ free and $H$ has trivial dual, hence it is $\Z$-clipped.

\begin{proposition}
\label{Prop-homogeneous}
Let $G\in\calG$ be countable and $\tau$ a type.
Then $G(\tau)$ admits a decomposition $G(\tau)=A_{\tau} \oplus B$
where $A_{\tau}$ is $\tau$-homogeneous and $B$ is $\tau$-clipped.
\end{proposition}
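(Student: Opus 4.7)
The strategy is a countable Stein-style recursion: extract rank-one summands of type exactly $\tau$ from $G(\tau)$ one by one, until none remain. This generalises the classical proof of Stein's theorem (the case $\tau = \type(\bbZ)$) to arbitrary types. Since $G$ is countable, so is $G(\tau)$, which is a pure subgroup of $G$.

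Fix an enumeration $\{g_n : n < \omega\}$ of $G(\tau)$. I shall construct inductively a sequence of decompositions $G(\tau) = A_n \oplus B_n$ with $A_0 = 0$, $B_0 = G(\tau)$, each $A_n$ a finite-rank $\tau$-homogeneous completely decomposable direct summand, $A_n \subseteq A_{n+1}$, and $B_{n+1} \subseteq B_n$. At step $n+1$, write $g_n = a_n + b_n$ with $a_n \in A_n$ and $b_n \in B_n$. If $\left< b_n \right>_*$ is a rank-one summand of $B_n$ of type exactly $\tau$, I set $A_{n+1} := A_n \oplus \left< b_n \right>_*$ and choose a direct complement $B_{n+1} \subseteq B_n$; otherwise I let $(A_{n+1}, B_{n+1}) := (A_n, B_n)$.

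The candidate decomposition is $G(\tau) = A_\tau \oplus B$ with $A_\tau := \bigcup_n A_n$ and $B := \bigcap_n B_n$. By construction $A_\tau$ is a direct sum of rank-one groups of type $\tau$, hence $\tau$-homogeneous (and even completely decomposable). For $B$ to be $\tau$-clipped: any rank-one summand $C$ of $B$ of type $\tau$ would be generated by some $x = g_n$, and one argues, by tracking the inductive step at stage $n+1$, that the extraction should have succeeded there, so $x \notin B$, a contradiction.

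The main obstacle I anticipate is justifying that $A_\tau$ is actually a direct summand of $G(\tau)$ with complement precisely $B$ --- in general the union of an ascending chain of direct summands need not itself be a direct summand. The plan is to show that the projections $\pi_n : G(\tau) \twoheadrightarrow A_n$ along $B_n$ stabilize on each fixed $g \in G(\tau)$, so that $g \mapsto \lim_n \pi_n(g)$ is a well-defined retraction onto $A_\tau$. Ensuring such stabilization requires a careful enumeration strategy (for instance, a back-and-forth pass that fully resolves $g_n$ at stage $n+1$ and never perturbs the already-chosen $A_n$-component afterwards), and this bookkeeping is the delicate part of the argument.
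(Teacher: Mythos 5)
Your plan is not the paper's argument, and as it stands it has a genuine gap at exactly the point you flag and then defer: the limit step. Everything hinges on showing $G(\tau)=A_\tau\oplus B$ with $A_\tau=\bigcup_n A_n$, $B=\bigcap_n B_n$, and this is not a matter of ``delicate bookkeeping'' that can be waved at --- an ascending chain of direct summands with nested complements can have a non-summand union and trivial intersection of complements. Concretely, for $\tau=\type(\Z)$ let $G$ be free on $e_1,e_2,\dotsc$ and $A_n=\left< e_1-2e_2,\dotsc,e_n-2e_{n+1}\right>$, $B_n=\left< e_{n+1},e_{n+2},\dotsc\right>$: each $A_n$ is a summand, the chains are nested, yet $\bigcup_n A_n$ is not a summand (the quotient is $2$-divisible) and $\bigcap_n B_n=0$. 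Your greedy rule (extract $\left< b_n\right>_*$ when it happens to be a type-$\tau$ rank-one summand of $B_n$) does not by itself prevent this kind of drift: an element $g_m$ whose extraction is skipped at stage $m$ can keep acquiring nonzero components in the summands split off at later stages, so $\pi_n(g_m)$ need not stabilize, and no enumeration strategy is exhibited (or argued to exist) that forces stabilization. Note also that your closing argument that $B$ is $\tau$-clipped silently uses the unproven limit decomposition: to see that a rank-one summand $C=\left< g_n\right>_*$ of $B$ is already a summand of $B_n$ you need $B$ to be a summand of $B_n$ (via the modular law), which presupposes $G(\tau)=A_\tau\oplus B$.

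For comparison, the paper avoids any recursion and instead follows Stein's original scheme: set $K(G)=\bigcap\,\{\Ker(\eta): 0\neq\eta\colon G\to\tau\}$, observe that $G/K(G)$ embeds in the vector group $\tau^{\Hom(G,\tau)}$, use Mishina's separability of that vector group together with the countability of $G$ to write the image as a countable ascending union of pure completely decomposable finite-rank subgroups, conclude by \cite[Theorem 3.14, page 429]{Fuchs} that the image is $\tau$-homogeneous completely decomposable, and then apply Baer's Lemma to split $K(G)$ off with a $\tau$-homogeneous complement; $K(G)$ is $\tau$-clipped by construction. In other words, the splitting is obtained in one stroke from Baer's Lemma applied to a quotient known to be completely decomposable --- that is the ingredient your stepwise extraction is missing. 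If you want to salvage a recursive proof you would need a substitute for this (some criterion guaranteeing that your chain of partial decompositions converges, e.g.\ control of the quotient $G(\tau)/\bigcup_n A_n$), and at that point you are essentially re-proving the cited machinery.
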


\begin{proof}
 Without loss of generality we assume that $G=G(\tau)$. We now follow Stein's proof \cite[Corollary 19.3]{Fuchs} and put
\[ K(G)=\bigcap_{ 0\not = \eta : G \rightarrow \tau} \Ker(\eta) \]
which is the intersection of all non-trivial homomorphisms from $G$ into a rational group of type $\tau$. Then 
\[ \alpha: G/K(G) \rightarrow \tau^{\Hom(G,\tau)}
  \quad \text{ with } \quad
  g \mapsto (\eta(g) : \eta \in \Hom(G,\tau)) \]
is a monomorphism from $G/K(G)$ into the elementary vector group
$\tau^{\Hom(G,\tau)}$.
We claim that
\begin{enumerate}
\item every element $x \in G \backslash K(G)$ has type $\tau$;
\item the image $\Image(\alpha)$ of $\alpha$ is
  $\tau$-homogeneous completely decomposable.
\end{enumerate}
Since $K(G)$ is pure in $G$ it then follows from Baer's lemma (see
\cite[Proposition 3.8, page 426]{Fuchs}) that $K(G)$ is a summand with
$\tau$-homogeneous completely decomposable complement $B$. Obviously
$K(G)$ is $\tau$-clipped which gives the desired Main Decomposition
$G=K(G) \oplus B$.

It remains to prove $(1)$ and $(2)$. However, $(1)$ is clear since any
element in $G$ has type greater than or equal to $\tau$ by assumption
and clearly all elements of type greater than $\tau$ are in the kernel
of any $\eta : G \rightarrow \tau$.

We still have to prove that $K(G)$ is a summand of $G$.
Since $G=G(\tau)$
we have that $Im(\alpha)$ satisfies the same property, i.e.
$(\Image(\alpha))(\tau) = \Image(\alpha)$.
Since $\tau^{\Hom(G,\tau)}$ certainly has no type greater than $\tau$
we conclude that $\Image(\alpha)$ is $\tau$-homogeneous.
Write $\Image(\alpha)$ as
the union of an ascending chain $\Image(\alpha)=\bigcup_{n}A_n$
of pure finite rank subgroups.
Then each of the $A_n$ is completely decomposable
since the vector group $\tau^{\Hom(G,\tau)}$ is separable
by a result due to Mishina (see \cite[page 512]{Fuchs}).
It now follows that $\Image(\alpha)$ itself
is completely decomposable by \cite[Theorem 3.14, page 429]{Fuchs}.
\end{proof}

We have an immediate 

\begin{theorem}\label{main-homogeneous}
Let $G$ be a $\tau$-homogeneous countable torsion-free group. Then $G$ has a Main Decomposition.
\end{theorem}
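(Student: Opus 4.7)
The plan is to apply Proposition~\ref{Prop-homogeneous} directly. Since $G$ is $\tau$-homogeneous, every nonzero element has type $\tau$, so in particular every element $g\in G$ satisfies $\type(g)\geq\tau$; hence $G=G(\tau)$. Proposition~\ref{Prop-homogeneous} then yields a decomposition
\[
G \;=\; G(\tau) \;=\; A_{\tau}\oplus B
\]
with $A_{\tau}$ $\tau$-homogeneous completely decomposable and $B$ $\tau$-clipped. The only thing left to check is that $B$ is in fact clipped, not merely $\tau$-clipped.

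To establish this, I would observe that $B$ is a pure subgroup (indeed, a summand) of the $\tau$-homogeneous group $G$, so every nonzero element of $B$ has type $\tau$, making $B$ itself $\tau$-homogeneous. Now suppose, for contradiction, that $B$ had a rank-one summand $\langle b\rangle_{*}$ of some type $\sigma$. Then $\sigma=\type(b)=\tau$, so $\langle b\rangle_{*}$ would be a rank-one summand of type $\tau$, contradicting the $\tau$-clippedness of $B$ guaranteed by Proposition~\ref{Prop-homogeneous}. Hence $B$ has no rank-one summand at all, i.e., $B$ is clipped, and $G=A_{\tau}\oplus B$ is a Main Decomposition as required.

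Because Proposition~\ref{Prop-homogeneous} has already done all the substantive work (constructing $K(G)$ as the intersection of the kernels of homomorphisms to $\tau$, embedding $G/K(G)$ into the separable vector group $\tau^{\Hom(G,\tau)}$, and splitting off a $\tau$-homogeneous completely decomposable complement), there is really no obstacle to this corollary: the homogeneity hypothesis on $G$ collapses the distinction between ``$\tau$-clipped'' and ``clipped'', which is the only gap between the proposition and the theorem.
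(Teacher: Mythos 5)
Your proof is correct and takes essentially the same route as the paper, which simply observes that $G=G(\tau)$ and invokes Proposition~\ref{Prop-homogeneous}. Your additional check that the $\tau$-clipped summand $B$ is actually clipped (since $B$, being a summand of the $\tau$-homogeneous group $G$, is itself $\tau$-homogeneous) is a small detail the paper leaves implicit, and it is argued correctly.
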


\begin{proof}
Since $G=G(\tau)$ we just apply Proposition \ref{Prop-homogeneous}.
\end{proof}

\begin{corollary}\label{flotypeset}
Let $G$ be a countable torsion-free group whose set of extractable types is finite and linearly ordered. Then $G$ has a Main Decomposition.
\end{corollary}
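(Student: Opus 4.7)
The plan is to proceed by induction on $n$, the cardinality of the finite linearly ordered extractable typeset $\tau_1 < \tau_2 < \dotsb < \tau_n$ of $G$. The base case $n = 0$ is immediate: $G$ is clipped, so $G = \{0\} \oplus G$ is already a Main Decomposition. For the inductive step, I aim to split off a $\tau_n$-homogeneous completely decomposable direct summand $A$ of $G$ so that the complement $H$ is $\tau_n$-clipped. Granting such a splitting $G = A \oplus H$, the extractable types of $H$, being a direct summand of $G$, lie in $\{\tau_1, \dotsc, \tau_n\}$, and the $\tau_n$-clippedness of $H$ excludes $\tau_n$ itself, so $H$ again satisfies the corollary's hypotheses with the strictly smaller linearly ordered typeset $\{\tau_1, \dotsc, \tau_{n-1}\}$. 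The inductive hypothesis then supplies a Main Decomposition $H = A' \oplus H'$ with $A'$ completely decomposable and $H'$ clipped, and concatenation yields the desired Main Decomposition $G = (A \oplus A') \oplus H'$.

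To produce the summand $A$, I would apply Proposition \ref{Prop-homogeneous} to the pure countable subgroup $G(\tau_n) \subseteq G$: this gives $G(\tau_n) = A \oplus B$ with $A$ being $\tau_n$-homogeneous completely decomposable and $B$ being $\tau_n$-clipped. Writing $A = \bigoplus_j R_j$ with each $R_j$ of rank $1$ and type $\tau_n$, the decomposition is witnessed by the projections $\pi_j \colon G(\tau_n) \to R_j$. Since $A$ is pure in $G$, it is a direct summand of $G$ exactly when each $\pi_j$ extends to a map $\bar\pi_j \colon G \to R_j$; by Baer's lemma this is controlled by the obstructions in $\Ext^1(G/G(\tau_n), R_j)$. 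Once $A$ is established as a summand of $G$, the $\tau_n$-clippedness of the complement $H$ follows: any rank $1$ $\tau_n$-summand of $H$ would lie in $H \cap G(\tau_n)$, which is itself a complement of $A$ in $G(\tau_n)$ and therefore isomorphic to the $\tau_n$-clipped group $B$, a contradiction.

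The main obstacle is the vanishing of these Ext obstructions, i.e., extending the projections from $G(\tau_n)$ to all of $G$. My plan here is to exploit the countability of $G$ together with the crucial fact that $\tau_n$ is the \emph{maximum} extractable type of $G$: every element of $G/G(\tau_n)$ is represented by an element of $G$ whose type is not above $\tau_n$, so $G/G(\tau_n)$ admits no rank $1$ quotient of type $\tau_n$; combined with a countable back-and-forth construction along an enumeration of $G$, this should force the required extension of each $\pi_j$ to $G$ and thereby complete the inductive step.
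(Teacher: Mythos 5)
Your inductive skeleton (induct on the length of the chain of extractable types, peel off the maximal type $\tau_n$ via Proposition \ref{Prop-homogeneous}, and recurse on the $\tau_n$-clipped complement) is essentially the induction the paper itself invokes, with Theorem \ref{main-homogeneous} handling the shortest chain. But your argument has a genuine gap exactly at the step that carries all the weight: you never prove that the $\tau_n$-homogeneous completely decomposable summand $A$ of $G(\tau_n)$ is a direct summand of $G$ itself, and the two assertions you offer in that direction are false or unsupported. First, for $A$ of infinite rank it is \emph{not} true that ``$A$ pure in $G$ is a summand exactly when each $\pi_j$ extends to $G$'': the group $\Z^{(\omega)}$ is pure in $\Z^{\omega}$ and every coordinate projection extends, yet it is not a summand. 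What is needed is a single projection $G\to A$, i.e.\ a coherent family of extensions whose combined map lands in the direct sum $\bigoplus_j R_j$ rather than in the product; extending the $\pi_j$ one at a time does not provide this. Second, the vanishing of the obstructions is only asserted (``should force''): the claim that $G/G(\tau_n)$ admits no rank one quotient of type $\tau_n$ because coset representatives have type not above $\tau_n$ is a non sequitur, since types and divisibility can jump upward in quotients; and in any case the obstruction to extending a given $\pi_j$ is a specific class in $\Ext(G/G(\tau_n),R_j)$, a group which has no general reason to vanish. No actual back-and-forth construction is given.

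Granting the splitting $G=A\oplus H$ with $A$ as in Proposition \ref{Prop-homogeneous}, the remainder of your argument is sound: the modular-law argument showing $H\cap G(\tau_n)\cong B$ is $\tau_n$-clipped, hence $H$ is $\tau_n$-clipped, the observation that the extractable typeset of $H$ is contained in $\{\tau_1,\dots,\tau_{n-1}\}$, and the concatenation $G=(A\oplus A')\oplus H'$ are all correct, as is the trivial base case. So your proposal in effect reduces the corollary to precisely the unproved claim that the homogeneous layer of $G(\tau_n)$ can be realized as a summand of all of $G$; as written, the inductive step is not established, and this is where a genuine argument (for instance a Stein--Baer type splitting criterion applied to a suitable pure subgroup of $G$ rather than of $G(\tau_n)$) would have to be supplied.
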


\begin{proof}
The proof is an easy induction on the length of the chain of extractable types. The case of length $1$ is settled by Theorem \ref{main-homogeneous}. \end{proof}

To summarize, we have shown the following on the existence of Main Decompositions. 

\begin{Summary}
A torsion-free abelian group $G$ admits a Main Decomposition in the following cases:
\begin{enumerate}
\item Lengths of direct decompositions are bounded.  (Proposition \ref{finite-endo-rank})
\item $\End(G)$ has finite rank. (Corollary \ref{finite rank End})
\item $G$ is countable homogeneous. (Theorem \ref{main-homogeneous})
\item $G$ is countable with finite linearly ordered extractable typeset. (Corollary \ref{flotypeset}) 
\end{enumerate}

\end{Summary}
We will see in Example \ref{Example-Corner-3} that there are countable
groups in $\calG$ that do not possess a Main Deocmposition.
However, the general question on the existence of Main Decompositions
remains open.

\begin{question}
  Can we characterise those (countable) torsion-free abelian groups
  $G$
  such that there exists a completely decomposable summand $A$
  with $G=A \oplus C$ and $C$ clipped?
\end{question}

In the remaining part of this section we study some examples constructed by A.L.S. Corner in order to show Main Decompositions for non-homogeneous torsion-free groups and in order to give an example of a countable torsion-free abelian group that does not possess a Main Decomposition. Note that all the examples are based on a completely decomposable subgroup that has an infinite anti-chain of types as critical typeset.\\

The first example shows that a countable group may have a non-trivial Main Decomposition and at the same time can be decomposed into two indecomposable summands of countably infinite rank.

\begin{Cexample}\cite[Theorem 1.2, page 482]{Fuchs}
\label{Example-Corner-1}
There exists a torsion-free group $G$ of countable rank that has two decompositions
\[ G=A \oplus B = C \oplus D  \]
where $B, C$ and $D$ are indecomposable of rank $\aleph_0$ and $A$ is completely decomposable. Hence $G=A \oplus B$ is a Main Decomposition of $G$.

\end{Cexample}

\begin{proof}
The group is constructed using
\begin{itemize}
\item $A=\left< p_n^{-\infty} a_n : n \in \N \right>$
\item $B=\left< p_n^{-\infty} b_n, p^{-1}q^{-1}(b_n-b_{n+1}) :  n \in \N \right>$
\item $C=\left< p_n^{-\infty}c_n, p^{-1}(c_n-c_{n+1}) : n \in \N  \right>$ 
\item $D=\left< p_n^{-\infty}d_n, q^{-1}(d_n-d_{n+1}) : n \in \N  \right>$ 
\end{itemize}
where the numbers $p_n, p, q$ are distinct primes and the $a_n, b_n$ are linearly independent base elements of a $\Q$-vector space. Moreover, the $c_n,d_n$ are chosen as
\[ c_n=p a_n + tb_n \quad \text{and} \quad d_n=qa_n + sb_n  \]
where the $t,s$ are integers such that \(p s - t q = 1\),
see \cite[Theorem 1.2, page 482]{Fuchs} for more details.
\end{proof}

The next example shows that the infinite direct sum of indecomposable rank $2$ groups can be at the same time a direct sum of two indecomposable groups of infinite rank and still have a Main Decomposition.
 
\begin{Cexample}\cite[Theorem 1.1, page 481]{Fuchs}
\label{Example-Corner-2}
There exists a torsion-free group $G$ of countable rank that has two decompositions
\[ G=B \oplus C = \bigoplus\limits_{n \in \Z} E_n \]
where $B$ and $C$ are indecomposable of rank $\aleph_0$ and $E_n$ are indecomposable of rank $2$.

\end{Cexample}

\begin{proof}
The group is constructed using
\begin{itemize}
\item $B=\left< p_n^{-\infty} b_n, q_n^{-1}(b_n+b_{n+1}) :  n \in \Z \right>$
\item $C=\left< p_n^{-\infty} c_n, r_n^{-1}(c_n+c_{n+1}) :  n \in \Z \right>$
\item $E_n=\left< p_n^{-\infty}u_n, p_{n+1}^{-\infty}v_{n+1}, q_n^{-1}r_n^{-1}(u_n+v_{n+1}) \right>$ for $n \in \Z$
\end{itemize}
where the $p_n, q_n, r_n$ are distinct primes and the $b_n, c_n$ are linearly independent base elements of a $\Q$-vector space. Moreover, the $u_n,v_n$ are chosen as
\[ u_n=(1+k_n)b_n-k_nc_n \quad \textit{and} \quad v_n=k_nb_n + (1-k_n)c_n \]
where the $k_n$ are specified as in \cite[Theorem 1.1, page 481]{Fuchs}.
\end{proof}

We now prove that the group from Example \ref{Example-Corner-2} does have a Main Decomposition.

\begin{lemma}
The group $G$ from Example \ref{Example-Corner-2} has a Main Decomposition.
\end{lemma}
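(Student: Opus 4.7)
The plan is to exhibit a non-trivial Main Decomposition explicitly: for each $n\in\Z$ I will extract a rank-$1$ summand $R_n$ of $G$ of type $\tau_n:=\type(b_n)=\type(c_n)$ (the type with $p_n$-height $\infty$ and $0$ elsewhere), collect them into a completely decomposable summand $A:=\bigoplus_{n}R_n$, and then show that the complement is clipped. The key preliminary observation, computed in the decomposition $G=B\oplus C$, is that $G(\tau_n)=\la b_n\ra_{*}\oplus\la c_n\ra_{*}\cong\Z[1/p_n]^{2}$ is $\tau_n$-homogeneous completely decomposable of rank $2$.

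First I set $w_n:=b_n+c_n$ and $R_n:=\la w_n\ra_{*}$; a routine height computation gives $\type(w_n)=\tau_n$, so $R_n\cong\Z[1/p_n]$. To see $R_n$ is a summand I construct a retraction $\sigma_n\colon G\to R_n$. Since the $\tau_m$ are pairwise incomparable and $R_n$ is $\tau_n$-homogeneous, types force $\sigma_n(b_m)=\sigma_n(c_m)=0$ for $m\neq n$, so $\sigma_n$ is determined by $\sigma_n(b_n)=\lambda_n w_n$ and $\sigma_n(c_n)=\mu_n w_n$ for some $\lambda_n,\mu_n\in\Z[1/p_n]$. Compatibility with the defining relations of $B$ at $q_{n-1},q_n$ forces $q_{n-1}q_n\mid\lambda_n$, the analogous constraint in $C$ forces $r_{n-1}r_n\mid\mu_n$, and the splitting condition $\sigma_n(w_n)=w_n$ reads $\lambda_n+\mu_n=1$. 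Since $q_{n-1}q_n$ and $r_{n-1}r_n$ are coprime, Bezout's identity provides a solution. Assembling, $\sigma:=\sum_{n}\sigma_n\colon G\to A$ is well defined because any $g\in G$ has finite support in the $b_m,c_m$, and $\sigma\restriction A=\mathrm{id}$, giving $G=A\oplus H$ with $H:=\ker\sigma$.

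The substantive part is proving $H$ clipped. Suppose for contradiction that $H$ has a rank-$1$ summand $R'$; then $R'$ is a rank-$1$ summand of $G$, with corresponding retraction $\pi\colon G\to R'$ and $R'$ of some type $\tau$. If $\pi|_{B}=0$, then $R'$ would embed as a rank-$1$ summand of $G/B\cong C$, contradicting indecomposability of $C$; by symmetry also $\pi|_{C}\neq0$. Choose $m,m'$ with $\pi(b_m)\neq0\neq\pi(c_{m'})$; since $\pi$ preserves types and $R'$ is $\tau$-homogeneous, $\tau\geq\tau_m$ and $\tau\geq\tau_{m'}$. But no element of $G$ has two distinct primes of infinite height (because $G(\tau_k)\cong\Z[1/p_k]^{2}$ is $\tau_k$-homogeneous for each $k$), so $m=m'=:n$ and $\tau=\tau_n$. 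Consequently $R'\subseteq H(\tau_n)$, and since $G(\tau_n)=R_n\oplus H(\tau_n)$ has rank $2$, $H(\tau_n)$ has rank $1$ and $R'=H(\tau_n)$.

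The main obstacle is the final contradiction. Writing $H=R'\oplus H''$ gives $G=A\oplus R'\oplus H''$, so $R_n\oplus R'$ is a pure rank-$2$ summand of $G$ contained in $G(\tau_n)$; by rank and purity $R_n\oplus R'=G(\tau_n)$, exhibiting $G(\tau_n)$ itself as a summand of $G$. Ruling this out is the crux of the argument: any retraction $\pi\colon G\to G(\tau_n)$ must fix $b_n,c_n$ and, by the same type argument as above, kill $b_m,c_m$ for $m\neq n$; applied to $q_n^{-1}(b_n+b_{n+1})\in G$ this yields $q_n^{-1}b_n$, which would then have to lie in $G(\tau_n)\cong\Z[1/p_n]^{2}$. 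This is impossible, since $b_n$ has $q_n$-height $0$ there. The resulting contradiction shows $H$ is clipped, so $G=A\oplus H$ is a Main Decomposition.
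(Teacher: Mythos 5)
Your proof is correct, but it takes a genuinely different route from the paper. The paper exhibits the decomposition completely explicitly: it sets $t_n=(1+\alpha_n)b_n-\alpha_nc_n$ and $z_n=\alpha_nb_n-(1-\alpha_n)c_n$ with $\alpha_n\equiv 0 \pmod{q_{n-1}q_n}$ and $\alpha_n\equiv -1\pmod{r_{n-1}r_n}$, checks that $t_n+t_{n+1}$ is divisible by $q_nr_n$, and claims $G=\bigoplus_n\la p_n^{-\infty}z_n\ra\oplus\la p_n^{-\infty}t_n,(t_n+t_{n+1})/(q_nr_n):n\in\Z\ra$, where the second summand is clipped because it is indecomposable (being of the same Corner shape as $B$ and $C$); the verification that this really is a direct sum equal to $G$ is left as ``readily verified.'' You instead split off the rank-one groups $R_n=\la b_n+c_n\ra_*$ by constructing retractions via a Bezout choice of $\lambda_n,\mu_n$ with $q_{n-1}q_n\mid\lambda_n$, $r_{n-1}r_n\mid\mu_n$, $\lambda_n+\mu_n=1$ (the same congruence trick the paper uses to define $\alpha_n$), take $H=\ker\sigma$, and prove clippedness abstractly: any rank-one summand of $H$ would have type $\tau_n$ for some $n$ (using indecomposability of $B$ and $C$ to rule out $\pi|_B=0$ or $\pi|_C=0$, and the fact that no nonzero element has infinite height at two of the $p_k$), would equal $H(\tau_n)$, and would force $G(\tau_n)=\Z[1/p_n]b_n\oplus\Z[1/p_n]c_n$ to be a summand of $G$, which your $q_n$-divisibility computation with $q_n^{-1}(b_n+b_{n+1})$ rules out. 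What the paper's approach buys is an explicit, structurally informative complement (it is indecomposable, not merely clipped); what yours buys is that you never need to identify $H$ or verify the somewhat fiddly direct-sum equality and indecomposability, and you isolate a reusable fact --- $G(\tau_n)$ is never a direct summand of $G$ --- which explains why the complement of your completely decomposable part is automatically clipped. The asserted ingredients (purity $\la b_n\ra_{*}=\Z[1/p_n]b_n$, $\tau_n$-homogeneity of $G(\tau_n)$) are routine height computations of the same level of detail the paper itself leaves to the reader.
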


\begin{proof}
We claim that $G$ has a decomposition of the form 
\[ G= \bigoplus\limits_{n \in \Z} \left< p_n^{\infty}z_n \right> \oplus \left< p_n^{\infty}t_n, \frac{t_n+t_{n+1}}{q_nr_n}: n \in \Z \right> \]
where the elements $t_n, z_n \in G$ are chosen appropriately for $n \in \Z$. It is then easy to see that the second summand $H:= \left< p_n^{\infty}t_n, \frac{t_n+t_{n+1}}{q_nr_n}: n \in \Z \right>$ is indecomposable (thus clipped) and hence $G$ possesses a Main Decomposition.
Define for $n \in \Z$ the following elements 
\[ t_n=(1+\alpha_n)b_n - \alpha_n c_n \quad \textit{ and }\quad z_n= \alpha_n b_n - (1- \alpha_n)c_n \]
where $\alpha_n \equiv 0$ modulo $q_{n-1}q_n$ and $\alpha_n \equiv -1$ modulo $r_{n-1}r_n$ for all $n \in \Z$. Note that this choice is possible.
Clearly, each $z_n$ is infinitely many times divisible by $p_n$ since the $b_n$ and $c_n$ are so. Moreover, 
\[ t_n + t_{n+1}= (1+\alpha_n)b_n - \alpha_nc_n + (1+\alpha_{n+1})b_{n+1} - \alpha_nc_{n+1} \]
and thus
\[ t_n + t_{n+1} \equiv b_n + b_{n+1} \mod q_n \]
and 
\[ t_n + t_{n+1} \equiv c_n+c_{n+1} \mod r_n \]
which implies that $t_n + t_{n+1}$ is divisible by $q_nr_n$ inside $G$ for all $n \in \Z$. It is now readily verified that $\bigoplus\limits_{n \in \Z} \left< p_n^{\infty}z_n \right> \oplus \left< p_n^{\infty}t_n, \frac{t_n+t_{n+1}}{q_nr_n}: n \in \Z \right> $ is indeed a direct sum equal to $G$.
\end{proof}

Finally, we give a last example where we shall prove that the group does not possess a Main Decomposition. Note that this group has continuously many non-isomorphic decompositions.

\begin{Cexample}\cite[Theorem 1.3, page 483]{Fuchs}
\label{Example-Corner-3}
  There exists a group $G\in\calG$ of countable rank
  such that for every choice of a sequence $r_1, \dotsc, r_n, \dotsc$
  of positive integers with infinitely many $r_n > 1$,
  there exist indecomposable groups $G_n$ of rank $r_n$
  such that $G=\bigoplus\limits_{n \in \N} G_n$.
\end{Cexample}

\begin{proof}
The group is constructed as $G=\bigoplus\limits_{n \in N} B_n$ using
\begin{itemize}
\item $B_n=\left< p^{-\infty} u_n, p_n^{-\infty}x_n, q_n^{-1}(u_n+x_n) :  n \in \N \right>$
\end{itemize}
where $p_n, q_n, p$ are distinct primes and the $u_n, x_n$ are
linearly independent base elements of a rational vector space.  The
different decompositions are obtained by rearranging summands and
applying the construction from \cite[Theorem 5.2, page 439]{Fuchs}.
\end{proof}

It was already noted by Corner in \cite{Corner} that the example is
best possible in the sense that the group $G$ cannot have
a Main Decomposition with finite rank clipped part.  This
also follows from our Lemma \ref{rank complement}.  However, we prove
next that the group from Example \ref{Example-Corner-3} does not
possess a Main Decomposition at all.

\begin{theorem}
  The group $G$ from Example \ref{Example-Corner-3}
  is a countable group that does not posses a Main Decomposition.
\end{theorem}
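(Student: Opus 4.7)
The plan is to assume a Main Decomposition $G=A\oplus H$ exists and derive a contradiction by isolating the type $\tau_p$ common to all $u_n$ and applying Baer's Lemma, in the spirit of Proposition~\ref{Prop-homogeneous}. Write $\tau_{p_n}$ for the type of $x_n$ and set $k_n:=q_n^{-1}(u_n+x_n)$.

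First I compute the relevant type-theoretic layers of $G$. A direct inspection of each $B_n$ gives $G(\tau_p)=\bigoplus_n\Z[1/p]u_n$ (its maximal $p$-divisible subgroup) and $G(\tau_{p_n})=\Z[1/p_n]x_n$, a rank-one indecomposable group. A more careful calculation shows that each quotient $B_n/\Z[1/p]u_n$ is rank one of type $\tau_{p_n}$, generated by the image of $k_n$; hence $G/G(\tau_p)\cong\bigoplus_{n\in\N}\Z[1/p_n]\bar k_n$ is completely decomposable with all critical types among the pairwise incomparable $\tau_{p_n}$, so it contains no non-zero element of type $\ge\tau_p$.

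From $G=A\oplus H$ we get $G(\tau_p)=A(\tau_p)\oplus H(\tau_p)$, so $H(\tau_p)$ is a summand of a completely decomposable $\tau_p$-homogeneous group, hence is itself completely decomposable and $\tau_p$-homogeneous. The quotient $H/H(\tau_p)$ embeds into $G/G(\tau_p)$, so it contains no non-zero element of type $\ge\tau_p$. Baer's Lemma (\cite[Proposition~3.8, page 426]{Fuchs}) then exhibits $H(\tau_p)$ as a direct summand of $H$; since $H$ is clipped we must have $H(\tau_p)=0$, forcing $u_n\in A$ for every $n$.

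To finish, for each $n$ the group $G(\tau_{p_n})=\Z[1/p_n]x_n$ is rank-one indecomposable and equals $A(\tau_{p_n})\oplus H(\tau_{p_n})$, so either $x_n\in A$ or $x_n\in H$ entirely. If $x_n\in H$ for some $n$, writing $k_n=a'+h'\in A\oplus H$ and comparing $A$- and $H$-coordinates in $u_n+x_n=q_na'+q_nh'$ forces $u_n=q_na'\in q_nA\subseteq q_nG$; but $u_n$ has $q_n$-height $0$ in $G$, a contradiction. Hence $x_n\in A$ for all $n$; purity of $A$ then gives $k_n\in A$, so $B_n\subseteq A$ for every $n$, whence $G=A$ is completely decomposable. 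This contradicts the fact (Kaplansky) that summands of completely decomposable groups are completely decomposable, together with the indecomposability of the rank-two $B_n$. The step I expect to be the main obstacle is the second paragraph's structural calculation: verifying that quotienting by the $p$-divisible part does not introduce new $p$-divisibility through the relation $q_nk_n=u_n+x_n$, which is exactly what makes $G/G(\tau_p)$ completely decomposable with the desired critical types and hence makes Baer's Lemma applicable.
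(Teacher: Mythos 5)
Your type computations are correct ($G(\tau_p)=\bigoplus_n\Z[1/p]u_n$, and $G/G(\tau_p)$ is completely decomposable with critical types $\tau_{p_n}$, none of which is $\geq\tau_p$), and the endgame (the $q_n$-height argument, purity of $A$, and the Baer--Kulikov--Kaplansky contradiction) would be fine if you could reach it. The proof breaks at the appeal to Baer's Lemma. Baer's Lemma, in the form used in Proposition~\ref{Prop-homogeneous}, splits off a pure subgroup $B$ of $H$ when the \emph{quotient} $H/B$ is $\tau$-homogeneous completely decomposable and $H=B+H(\tau)$; it does not assert that the \emph{subgroup} $H(\tau)$ splits off whenever $H(\tau)$ is $\tau$-homogeneous completely decomposable and $H/H(\tau)$ has no nonzero element of type $\geq\tau$. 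That inference is false in general, and it already fails inside the group $G$ itself: take $H=B_1=\left< p^{-\infty}u_1,\ p_1^{-\infty}x_1,\ q_1^{-1}(u_1+x_1)\right>$, an indecomposable rank-two (hence clipped) direct summand of $G$. Then $H(\tau_p)=\Z[1/p]u_1$ is homogeneous completely decomposable, $H/H(\tau_p)\cong\Z[1/p_1]+\Z q_1^{-1}$ has no nonzero element of type $\geq\tau_p$, and yet $H(\tau_p)$ is not a direct summand of the indecomposable group $H$. Since your second paragraph uses only that $H$ is a clipped direct summand of $G$ (nowhere that its complement is completely decomposable), it would prove $B_1(\tau_p)=0$, which is false. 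Hence the conclusion $H(\tau_p)=0$, and with it $u_n\in A$ for all $n$, is not established, and the remainder of the argument depends on it.

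This is not a repairable citation slip but reflects the actual difficulty: $G$ has an abundance of clipped summands meeting $G(\tau_p)$ nontrivially (every $B_n$, and more generally the finite-rank indecomposables $B_{v,S,\alpha}$ appearing in the paper's proof), so no argument can force the clipped part of a putative Main Decomposition to avoid the $p$-divisible elements. The paper's route is different: Theorem~\ref{rank complement} together with Corner's remark first forces the clipped part $H'$ to have infinite rank; then a structure analysis shows that every direct summand of $G$ is a direct sum of the finite-rank indecomposable groups $B_{v,S,\alpha}$, and Corner's recombination argument (\cite[Theorem 5.2, page 439]{Fuchs}) shows that such an infinite direct sum always admits a rank-one summand, contradicting clippedness. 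Any repair of your approach would have to engage with this recombination phenomenon rather than with the type structure alone.
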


\begin{proof}
Assume that $G$ has a Main Decomposition $G=A \oplus H'$
with $A$ completely decomposable and $H'$ clipped.
By Lemma \ref{rank complement} and Corner's remark
we must have that the rank of $H'$ is infinite.
However, we prove next that
every direct summand of \(G\) is a direct sum of finite-rank
indecomposable groups of the form
  \begin{equation}
    \label{eq:1}
    B_{v, S, \alpha} \coloneqq
    \left<
      p^{-\infty} v, p_{n}^{-\infty} x_{n},
      q_{n}^{-1} (v + \alpha_{n} x_{n}) : n \in S
    \right>
  \end{equation}
  for some finite set \(S \subseteq \mathbb{N}\), \(v \in G\)
  and integers \(\alpha_{n}\).
  
  It then follows as in \cite[Theorem 5.2, page 439]{Fuchs} that a combination of the summands $B_{v, S, \alpha}$ always have summands of rank $1$ and hence $H'$ is not clipped, a contradiction.

 We now prove statement \eqref{eq:1}.
 Let \(H\) be a direct summand of \(G\) (not necessarily clipped), and let \(H_{p}\) denote the
\(p\)-divisible part of \(H\).
By standard arguments,
using that \(\left< p^{-\infty} u_{n} : n = 0, 1, \dotsc \right>\)
and \(\left< p_{n}^{-\infty} x_{n} \right>\) are fully invariant,
\(H\) has a presentation of the form
\begin{equation}
  \label{eq:2}
  H \coloneqq 
  \left<
    p^{-\infty} v_{i}, p_{n}^{-\infty} x_{n},
    q_{n}^{-1} (w_{n} + x_{n})
    : i \in I, n \in S
  \right>
\end{equation}
for some possibly infinite \(S \subseteq \mathbb{N}\),
where the \(w_{n}\) are contained in \(H_{p}\).
Obviously, \(H_{p} = \left< p^{-\infty} v_{i} : i \in I \right>\).

We decompose \(H\) into indecomposable summands as follows.
We enumerate the elements of \(S\) as
a sequence \(m_{1}, m_{2}, \dotsc\).
First we find a direct summand \(B_{v_{1}, T_{1}, \alpha_{1}}\)
of \(H\) with \(m_{1} \in T\), as explained later.
Then we find a direct summand \(B_{v_{2}, T_{2}, \alpha_{2}}\)
in the complement containing the next \(m_{i}\) not contained in
\(T_{1}\).
We continue this process until the sequence \(S\) is exhausted,
this will happen after at most countable many steps.
As a result, we obtain a possibly infinite sequence of subgroups
\(B_{v_{1}, T_{1}, \alpha_{1}}, B_{v_{2}, T_{2}, \alpha_{2}}, \dotsc\)
such that their direct sum
\(B \coloneqq \bigoplus_{k=1, \dotsc} B_{v_{k}, T_{k}, \alpha_{k}}\)
is a pure subgroup of \(H\)
containing all the \(x_{n}\) for \(n \in S\),
and hence \(H = B + H_{p}\).
Therefore, \(B \cap H_{p}\) is pure
in the homogeneous, completely decomposable group \(H_{p}\),
and hence it is also a direct
summand: \(H_{p} \coloneqq B \oplus C\) with \(C\) completely
decomposable.
It follows that \(H = B \oplus C\) producing the desired decomposition
of \(H\).
(The rank \(1\) summands of \(C\) have the form
\(B_{v, \emptyset, ())}\).)

Finally, we return to finding the direct summands
\(B_{v_{k}, T_{k}, \alpha_{k}}\) of \(H\).
Given an arbitrary \(m \in S\),
we shall find a direct summand \(B_{v, T, \alpha}\) of \(H\)
for some finite set \(T \subseteq S\) containing \(m\).

Choose \(v_{0} = w_{m} / k\) for some positive integer \(k\)
such that \(\left< p^{-\infty} v \right>\) is pure in \(H_{p}\).
In particular,
\begin{equation}
  \label{eq:3}
  H_{p} =
  \bigoplus_{k=0}^{N} \left< p^{-\infty} v_{k} \right>
  ,
\end{equation}
where \(N\) is a nonnegative integer or infinity.
Now the \(w_{n}\) have the form
\begin{equation}
  \label{eq:4}
  w_{n} = \sum_{i=1}^{k_{n}} \beta_{n, i} v_{i}
\end{equation}
with \(\beta_{n, i} \in \mathbb{Z}[1/p]\)
and without loss of generality, \(\beta_{n,k_{n}}\) is not divisible
by \(q_{n}\).
We claim that there are only finitely many \(n\) with \(k_{n} \leq l\)
for any nonnegative integer \(l\).
Indeed, writing the \(v_{i}\) as linear combination of the \(u_{n}\),
and considering \(w_{n} - u_{n}\), which is divisible by \(q_{n}\),
we conclude that the coefficient of \(u_{n}\) in \(w_{n} - u_{n}\)
must be divisible by \(q_{n}\), which is only possible if \(u_{n}\)
appears with non-zero coefficient in at least one of \(v_{0}, v_{1},
\dotsc, v_{k_{n}}\).
Therefore \(k_{n} \leq l\) is only possible for the finitely many
\(n\) for which \(u_{n}\) has a non-zero coefficient in some of
\(v_{0}, \dotsc, v_{l}\).

Let \(T\) be the set of \(n \in S\) with \(k_{n} = 0\),
which includes \(n = m\) by the choice of \(v_{0}\),
and is finite by the previous paragraph.
Let \(\alpha_{n}\) be the multiplicative inverse of \(\beta_{n, 0}\)
modulo \(q_{n}\) for \(n \in T\),
thus \(v_{0} + \alpha_{n} x_{n}\)
is divisible by \(q_{n}\).

Now we show that \(B_{v_{0}, T, \alpha}\) is a direct summand of \(H\)
by defining a splitting map
\(\varphi \colon H \to B_{v_{0}, T, \alpha}\)
to the inclusion of \(B_{v_{0}, T, \alpha}\) into \(H\).
We set \(v_{0} \varphi = v_{0}\) and
\(x_{n} \varphi = x_{n}\) for \(n \in T\).
Let \(x_{n} \varphi = 0\) for \(n \notin T\).
We define the \(v_{k} \varphi \in p^{-\infty} \left< v_{0} \right>\)
inductively, subject to the condition
that \(\sum_{i=1}^{k} \beta_{n, i} (v_{i} \varphi)\)
is divisible by \(q_{n}\) for all \(n \in S \setminus T\)
with \(k_{n} = k\), which is an explicit statement
of \((w_{n} + x_{n}) \varphi\) being divisible by \(q_{n}\).
(This condition is vacuous for \(k = 0\).)
By the Chinese Remainder Theorem, \(v_{k} \phi\) can be chosen to
satisfy this condition, because there are finitely many \(n\) with
\(k_{n} = k\)
as shown above, and for these \(n\) the coefficient \(\beta_{n,k}\) is
not divisible by \(q_{n}\).

This defines \(\varphi\) on the linearly independent generators
\(v_{i}\) and \(x_{n}\) of \(H\), from which it uniquely extends to
\(H\).
By the definition it is obvious that the image of \(\varphi\) is
\(B_{v_{0}, T, \alpha}\).

\end{proof}

\end{document}